    \def\RR{\mathbb{R}}
    \def\Re{\mathfrak{R}}
\newtheorem{theorem}{Theorem}[section]
\newtheorem{corollary}{Corollary}[theorem]
\newtheorem{lemma}{Lemma}[section]
\theoremstyle{definition}
\newtheorem{Remark}{Remark}[section]
\newtheorem{Theorem}{Theorem}[section]
\def\be{\begin{equation}}
\def\ee{\end{equation}}
\def\ge{\geqslant}
\def\le{\leqslant}
\def\bd{\begin{Definition}}
\def\ed{\end{Definition}}
\def\bt{\begin{Theorem}}
\def\et{\end{Theorem}}
 \def\bt{\begin{Remark}}
\def\et{\end{Remark}}
\def\epsilon{\varepsilon}
\def\bel{\begin{equation}\label}
\def\ee{\end{equation}}
\def\Ei\text{Ei}
\def\phi{\varphi}
\title{Long time evolution of the H\'enon-Heiles system for small energy}
\date{}
\author{Ovidiu Costin\textsuperscript{1}, Rodica Costin\textsuperscript{2}, Kriti Sehgal\textsuperscript{3}}
\thanks{\textsuperscript{1}The Ohio State University. Email: \href{mailto:costin.9@osu.edu}{costin.9@osu.edu}}
\thanks{\textsuperscript{2}The Ohio State University. Email: \href{mailto:costin.10@osu.edu}{costin.10@osu.edu}}
\thanks{\textsuperscript{3}The University of Chicago. Email: \href{mailto:ksehgal@uchicago.edu}{ksehgal@uchicago.edu}}
\begin{document}

\maketitle
\section{Abstract}
The H\'enon-Heiles system, initially introduced as a simplified model of galactic dynamics, has become a paradigmatic example in the study of nonlinear systems. Despite its simplicity, it exhibits remarkably rich dynamical behavior, including the interplay between regular and chaotic orbital dynamics, resonances, and stochastic regions in phase space, which have inspired extensive research in nonlinear dynamics.

In this work, we investigate the system's solutions at small energy levels, deriving asymptotic constants of motion that remain valid over remarkably long timescales—far exceeding the range of validity of conventional perturbation techniques. Our approach leverages the system's inherent two-scale dynamics, employing a novel analytical framework to uncover these long-lived invariants.

The derived formulas exhibit excellent agreement with numerical simulations, providing a deeper understanding of the system's long-term behavior.


\tableofcontents

\newpage
\section{Introduction}

In 1964, Michel Hénon and Carl Heiles introduced a model for the planar motion of a star under the influence of a galactic center (represented by a rotationally symmetric potential). This model, motivated by the question of the existence of a third integral for galactic motion, is both analytically simple to formulate and "sufficiently complicated to give trajectories that are far from trivial" \cite{HenonHeiles}. Hénon and Heiles’ numerical investigations revealed highly intricate trajectory behavior, including an infinite number of islands where some trajectories remain confined, chains linking these islands, and ergodic trajectories densely filling the surrounding region.

The Hénon-Heiles potential is given by \begin{equation} \label{potential} V(x,y) = \dfrac{1}{2}(x^2 + y^2) + x^2y - \dfrac{1}{3}y^3, \end{equation} which can be interpreted as two harmonic oscillators coupled by a cubic "perturbation." The corresponding Hamiltonian is \begin{equation} \label{hamiltonian} h = \dfrac{1}{2} (\dot{x}^2 + \dot{y}^2) + V(x,y), \end{equation} and the equations of motion are \begin{equation} \begin{aligned} \frac{d^2x}{dt^2} &= -x - 2xy, \  \ \ \frac{d^2y}{dt^2} &= -y + y^2 - x^2. \end{aligned} \end{equation}

From a theoretical perspective, this is a four-dimensional system of ordinary differential equations with a resonant fixed point at the origin. The dynamics near such resonant fixed points remain poorly understood, presenting an enduring theoretical challenge.

The numerical results of Hénon and Heiles have inspired extensive mathematical investigations into this system, employing diverse analytical and numerical techniques. These include studies of fractal structures \cite{barrio2008fractal}, escape dynamics \cite{blesa2012escape}, separability conditions \cite{ravoson1993separability}, and chaotic transitions \cite{Ito}, as well as broader explorations of integrability properties \cite{Casti},  \cite{Chrch}, \cite{Conte}, \cite{fordy1991henon}, \cite{Zh} and numerical studies \cite{Zotos}. This rich body of work underscores the continued relevance of the Hénon-Heiles system as a benchmark for understanding nonlinear dynamics.

For energies $h$ with $0 < h < 1/6$, the trajectories are confined within a triangular region bounded by the equipotential curve $h = 1/6$. At higher energies, trajectories can escape to infinity \cite{Zotos}.

In this paper, we analyze the trajectories for small variables; these evolve within the triangular confinement region. We provide asymptotic formulas for the solutions, valid for time periods significantly longer than those for which simple perturbation series are valid.

\vspace{10pt}

\subsection{Features Revealed Numerically}

Numerical calculations reveal several interesting features of the trajectories. Figure \ref{fig:evolution}
 plots the curve $\left(x(t),y(t)\right)$ for a numerically obtained solution of the Hénon-Heiles system with initial conditions $x(0) = 0.1$, $y(0) = 0$, $\dot{x}(0) = 0.08$, and $\dot{y}(0) = 0.1$ (for which $h = 0.0132 < 1/6$). The following features are observed: (A) over a short time, the trajectory is almost periodic; (B) each nearly closed trajectory drifts over time; (C) over a long time, the trajectory densely fills a domain.

\begin{figure}[h!] \centering \begin{subfigure}[b]{0.32\textwidth} \centering \includegraphics[width=\textwidth]{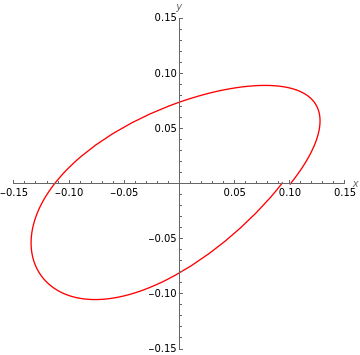} \caption{For $t = 6.27$} \label{fig:loop}\end{subfigure} \hfill \begin{subfigure}[b]{0.32\textwidth} \centering \includegraphics[width=\textwidth]{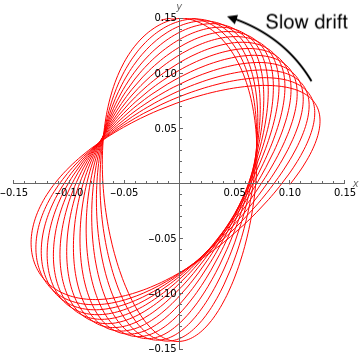} \caption{For $t = 90$} \label{fig:drift}\end{subfigure} \hfill \begin{subfigure}[b]{0.32\textwidth} \centering  \includegraphics[width=\textwidth]{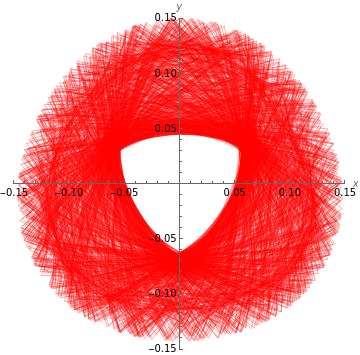} \caption{For $t = 4000$} \label{fig:long_time}\end{subfigure} \captionsetup{width = \linewidth} \caption[Numerically calculated evolution of the system]{Numerically calculated evolution of the system for initial conditions $x(0) = 0.1$, $y(0) = 0$, $\dot{x}(0) = 0.08$, and $\dot{y}(0) = 0.1$.} \label{fig:evolution} \end{figure}

We prove that the behaviors illustrated in (A) and (B) are mathematically correct. We derive explicit approximations for the solutions and provide rigorous estimates for the time span over which these approximations remain valid.
\subsection{Rescaling}

To investigate the system in the regime of small variable values, we introduce a small parameter \(\epsilon\) through a rescaling of the variables. Specifically, we apply the transformations \(x \mapsto \epsilon x\) and \(y \mapsto \epsilon y\). Under this rescaling, the Hénon-Heiles system assumes a ``perturbed'' form:

\begin{align}
\label{perturbedsystem}
\begin{split}
    \frac{d^2x}{dt^2} &= -x - 2 \epsilon x y, \\
    \frac{d^2y}{dt^2} &= -y + \epsilon y^2 - \epsilon x^2.
\end{split}
\end{align}
The Hamiltonian rescales as \(h \mapsto h / \epsilon^2\); for simplicity, we will continue to denote it by \(h\):
\begin{equation}
\label{hepsilon}
    h = \dfrac{1}{2} \left(x^2 + y^2 + \dot{x}^2 + \dot{y}^2 \right) + \epsilon x^2 y - \dfrac{\epsilon}{3} y^3.
\end{equation}

It is important to note that the speed of the slow evolution decreases as \(\epsilon\) decreases, as shown in Figure \ref{fig:epsilon_effect}.

\begin{figure}[hbt!]
     \centering
     \begin{subfigure}[b]{0.32\textwidth}
         \centering
         \includegraphics[width=\textwidth]{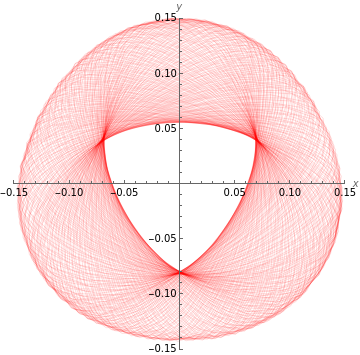}
         \caption{$\epsilon = 1$}
     \end{subfigure}
     \hfill
     \begin{subfigure}[b]{0.32\textwidth}
         \centering
         \includegraphics[width=\textwidth]{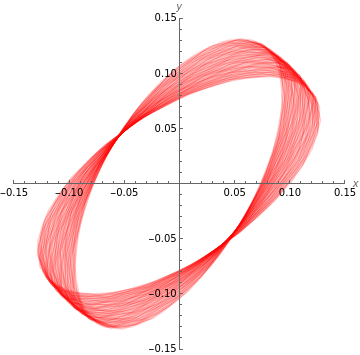}
         \caption{$\epsilon = 0.2$}
     \end{subfigure}
     \hfill
     \begin{subfigure}[b]{0.32\textwidth}
         \centering
         \includegraphics[width=\textwidth]{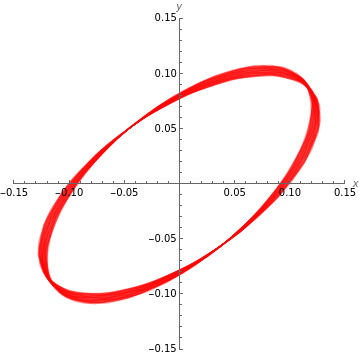}
         \caption{$\epsilon = 0.1$}
     \end{subfigure}
     \captionsetup{width = \linewidth}
     \caption{Comparison of the evolution of the system with initial conditions $x(0) = 0.1$, $y(0) = 0$, $\dot{x}(0) = 0.08$, and $\dot{y}(0) = 0.1$ from time $t = 0$ to $t = 1000$ for various values of \(\epsilon\).}
     \label{fig:epsilon_effect}
\end{figure}

\subsection{Perturbation series solution}

It is natural to attempt solving the system \eqref{perturbedsystem} using a perturbation series in \(\epsilon\). Introducing an expansion of the form
\begin{equation}
    \label{xyperturbseries}
    \begin{split}
        x(t) &= x_0 \cos (t) + \dot{x}_0 \sin (t) + \epsilon x_{\epsilon}(t) + \epsilon^2 x_{\epsilon^2}(t) + O(\epsilon^3), \\
        y(t) &= y_0 \cos (t) + \dot{y}_0 \sin (t) + \epsilon y_{\epsilon}(t) + \epsilon^2 y_{\epsilon^2}(t) + O(\epsilon^3),
    \end{split}
\end{equation}
a straightforward calculation yields the terms of this expansion explicitly, and these are given in Appendix \ref{epsilon square term}. We observe that \(x_{\epsilon}(t)\) and \(y_{\epsilon}(t)\) are \(2\pi\)-periodic, while the coefficients of \(\epsilon^2\) take the form:
\[
x_{\epsilon^2}(t) = f_1(t) + t f_2(t), \quad y_{\epsilon^2}(t) = g_1(t) + t g_2(t),
\]
where \(f_j(t)\) and \(g_j(t)\) (\(j = 1,2\)) are \(2\pi\)-periodic functions. This generates a {\em secular term}  \(t\epsilon^2\), which implies that the perturbation expansion holds only for times that are not too large, namely up to \(t\epsilon^2 = O(1)\).

We also observe the ``almost periodic'' nature of the motion. The time \(T\) required for a solution with initial conditions \(x_0, y_0, \dot{x}_0, \dot{y}_0\) to return to \(y(T) = y_0\) can be determined using the expansion above,  yielding
\begin{equation}
\label{time one loop}
T = 2\pi + \epsilon^2 \pi  \frac{14 x_0 y_0 \dot{x}_0 - 9 x_0^2 \dot{y}_0 + 5 y_0^2 \dot{y}_0 + 5 \dot{x}_0^2 \dot{y}_0 + 5 \dot{y}_0^3}{6 \dot{y}_0} + O(\epsilon^3), \quad \text{provided} \ \dot{y}_0 \neq 0.
\end{equation}
If \(\dot{y}_0 = 0\), the solutions do not exactly return to \(y_0\), but instead come close to it. In such cases, we find "approximate constants" valid for a long time, as described in Theorem \ref{mainth}(i).

As illustrated by the figures, the system exhibits multiscale behavior. This lies at the core of the secular terms—a well-known phenomenon \cite{Arnold}—which ultimately limits the validity of the expansion to a timescale too short to capture the system's intricate long-term dynamics.

To address this issue, a variety of multiscale methods have been introduced in both mathematics and physics \cite{Multiscale}. However, in this particular problem, the application of classical multiscale approaches proved to be unwieldy. Instead, we utilize "approximate adiabatic invariants", an approach and methods introduced in \cite{costin2016direct}, \cite{costin2015tronquee}.


\section{Main Results}

\subsection{Method used}

As in \cite{costin2016direct}, \cite{costin2015tronquee}, we use the Poincar\'e map to eliminate the fast variable, reducing the problems to a purely slow evolution one.

We define a variable \(u := \Phi(x, \dot{x}, y, \dot{y})\) as "slow" if \(\frac{d}{dt}\Phi(x, \dot{x}, y, \dot{y}) = O(\epsilon)\). It turns out that the homogeneous quadratic polynomials that represent slow variables are \(x^2 + \dot{x}^2\), \(y^2 + \dot{y}^2\), \(xy + \dot{x}\dot{y}\), and \(\dot{x}y - x\dot{y}\) (as well as their combinations).

We choose the following slow variables:
\begin{equation}\label{vw}
v = y^2 + \dot{y}^2, \quad w = \dot{x}\dot{y} + xy,
\end{equation}
in addition to \(h\) (which is not only slow but in fact constant). It turns out that our final result is more aesthetically pleasing when expressed in terms of \(u := h - v\) (another slow variable) instead of \(v\), and we will adopt this convention, though we will occasionally return to \(v\) during the proof.

Theorem\,\ref{mainth} summarizes our main results: we provide approximations for solutions that are valid for times much longer than those obtained from perturbation series expansions \eqref{xyperturbseries}.

Before stating the rigorous results, it is useful to observe the behavior of \(v(t)\) and \(w(t)\) numerically, and compare them with a non-slow variable. In figure \ref{fig:vwgraphs} (A) and (B), we observe that \(v(t)\) and \(w(t)\) exhibit a sinusoidal behavior. For \(\epsilon = 0.1\), the slow period for both \(v(t)\) and \(w(t)\) is approximately 7570. During this time, in a simple \(\epsilon\)-expansion, the secular terms grow to as large as 75.7, causing the expansion to fail after a small fraction of the period.
Superimposed over this slow sinusoidal behavior, we observe fast oscillations of small amplitude. In contrast, the fast variable \(x(t)\), shown in Figure \ref{fig:vwgraphs} (C), oscillates rapidly within a larger range. 
\footnote{The plot in Figure \ref{fig:vwgraphs} was obtained by numerically solving the system \eqref{perturbedsystem} with initial conditions \(x_0 = \frac{\sqrt{3/5}}{2}, y_0 = 0, \dot{x}_0 = \frac{1}{5}, \dot{y}_0 = \frac{1}{10}\), and \(\epsilon = 0.1\) (for which \(h = 0.1\), smaller than \(\tfrac{1}{6}\)). We used Mathematica's \texttt{NDSolve}, with \texttt{AccuracyGoal} set to 19. From this, we computed \(v(t)\) and \(w(t)\).}

\begin{figure}[hbt!]
     \centering
     \begin{subfigure}[b]{0.32\textwidth}
         \centering
         \includegraphics[width=\textwidth]{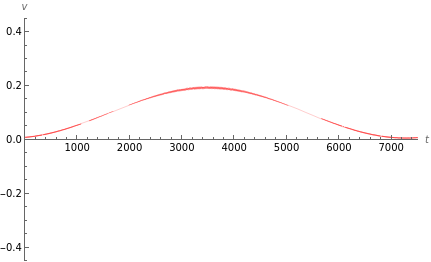}
         \caption{$v(t)=y(t)^2 + \dot{y}(t)^2$}
     \end{subfigure}
     \hfill
     \begin{subfigure}[b]{0.32\textwidth}
         \centering
         \includegraphics[width=\textwidth]{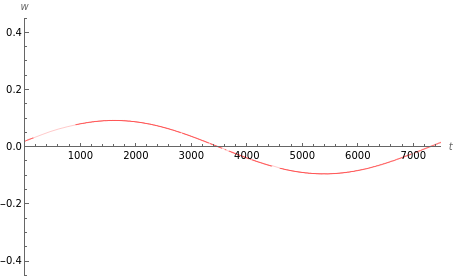}
         \caption{$w(t)=x(t)y(t) + \dot{x}(t)\dot{y}(t)$}
     \end{subfigure}
     \hfill
     \begin{subfigure}[b]{0.32\textwidth}
         \centering
         \includegraphics[width=\textwidth]{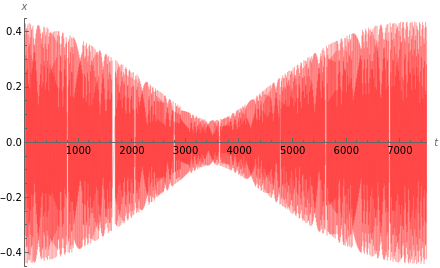}
         \caption{Fast variable $x(t)$}
     \end{subfigure}
     \captionsetup{width = \linewidth}
     \caption[Slow variables $v$ and $w$]{The evolution of the slow variables $v(t)$ and $w(t)$, and a non-slow variable $x(t)$ with initial conditions $x_0 = \frac{\sqrt{3/5}}{2}, y_0=0, \dot{x}_0 = \frac{1}{5}, \dot{y}_0 = \frac{1}{10}$ and $\epsilon = 0.1$.}
     \label{fig:vwgraphs}
\end{figure}

\subsection{Main results}

The main theoretical result is Theorem\,\ref{mainth}. It demonstrates that the iterated Poincaré map of the slow variables \(u, w\) with respect to the manifold \(y=0\) satisfies the recursive formula \eqref{unwn00}, \eqref{formphin} (in real variables \eqref{sincos00}), which is valid for \(n\) such that \(n\epsilon^3 \ll 1\). For \(n\) slightly smaller, such that \(n\epsilon^{5/2} \ll 1\), the recursion decouples, and we obtain the simpler formula \eqref{unwnsmaller100}; this \(n\) is beyond the range of validity of the perturbation series.

Using the iterated Poincaré map given by Theorem\,\ref{mainth}, the time evolution at any time \(t\) on the \(n+1\)-th fast cycle (where \(n\) is the integer part of \(t/T\), with \(T\) given by \eqref{time one loop}) can be obtained by straightforward integration or by using the perturbation series, with initial conditions calculated from \(u_n, w_n, h\).

The proof of Theorem\,\ref{mainth} can be found in section \S\ref{pfth}.

Section \S\ref{numerical} compares the theoretical results of the theorem with actual numerical results, showing excellent agreement, as seen in Figure\,\ref{finalcomp}. 

\begin{figure}[hbt!]
     \centering
     \begin{subfigure}[b]{0.47\textwidth}
         \centering
         \includegraphics[width=\textwidth]{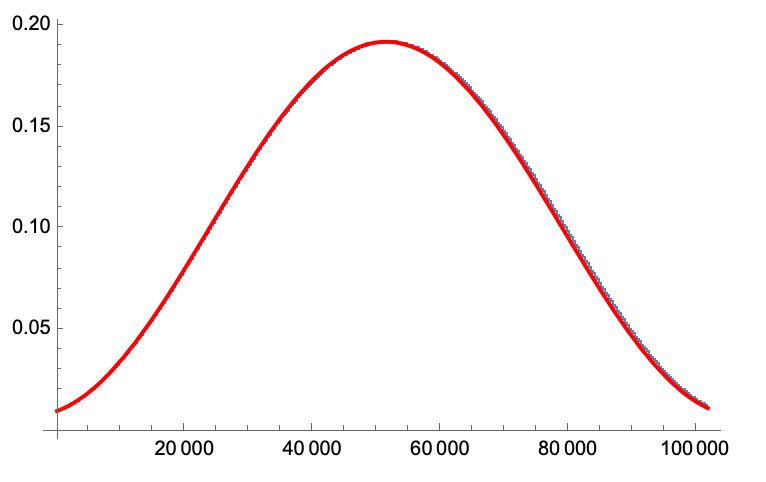}
         \caption{The quantity $v_n$ obtained numerically (blue) and calculated from \eqref{sincos00} (red).}
     \end{subfigure}
     \hfill
     \begin{subfigure}[b]{0.47\textwidth}
         \centering
         \includegraphics[width=\textwidth]{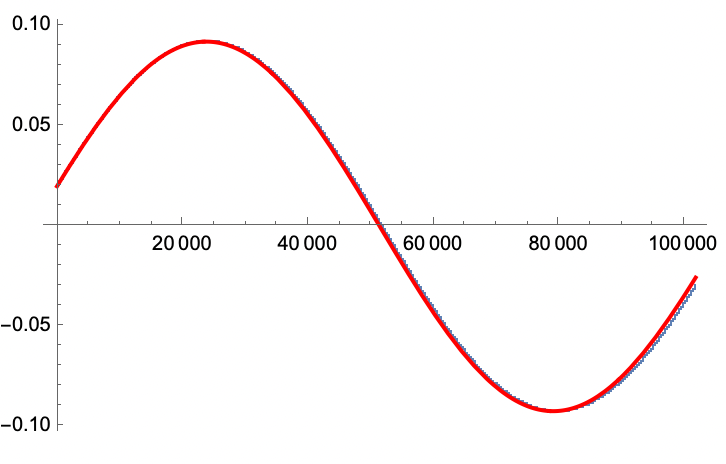}
         \caption{The quantity $w_n$ obtained numerically (blue) and calculated from \eqref{sincos00} (red).}
     \end{subfigure}
     \captionsetup{width = \linewidth}
     \caption{Comparison between numerical calculations and \eqref{sincos00} for $n = 1024000$, $h = 0.1$, $\epsilon = 0.01$,  $x_0 = \frac{\sqrt{3/5}}{2}$, $y_0=0$, $\dot{x}_0= 0.2$, and $\dot{y}_0=0.1$.}
     \label{finalcomp}
\end{figure}

We note that Theorem\,\ref{mainth} assumes the initial condition \(y(0) = 0\), which can always be arranged by allowing the system to evolve until \(y(t) = 0\).

The assumption \eqref{condu0w0} in the Theorem \ref{mainth} simply states that \(x_0 \neq 0\) (since, with \(y_0 = 0\), we have \(h^2 - u_0^2 - w_0^2 = \dot{y}_0^2 \left(2h - \dot{y}_0^2 - \dot{x}_0^2\right) = \dot{y}_0^2 x_0^2 \geq 0\)). In the case \(x_0 = 0\), higher-order expansions are required, which we do not pursue here.

\begin{Theorem}\label{mainth}
Let \(x(t), y(t)\) be solutions of \eqref{perturbedsystem} with initial conditions 
\begin{equation}
\label{indcond}
x(0) = x_0,\ y(0) = 0,\ \dot{x}(0) = \dot{x}_0,\ \dot{y}(0) = \dot{y}_0
\end{equation}
where \(x_0, \dot{x}_0, \dot{y}_0 \in \RR\).

Define \(u = h - \left(y^2 + \dot{y}^2\right)\) and \(w = \dot{x}\dot{y} + xy\), and let \(u_0 := u(0) = h - \dot{y}_0^2\) and \(w_0 := w(0) = \dot{x}_0\dot{y}_0\). Denote by \(u_n, w_n\) the iterated Poincaré map with respect to the manifold \(\{(x, y, \dot{x}, \dot{y}) | y = 0\}\)  (see, e.g., \cite{Coddington}).

(i) If \(\dot{y}_0 = 0\), then \(w_n = O(n\epsilon^3)\) and \(u_n = O(n\epsilon^4)\).

(ii) If \(\dot{y}_0 \neq 0\) and \(u_0 = w_0 = 0\), we have \(u_n = O(n\epsilon^3)\) and \(w_n = O(n\epsilon^3)\).

(iii) If \(\dot{y}_0 \neq 0\), and \(u_0 \neq 0\) or \(w_0 \neq 0\), satisfying 
\begin{equation}
\label{condu0w0}
u_0^2 + w_0^2 < h^2,
\end{equation}
then there exist positive constants \(\epsilon_0\), \(K_0\), and \(M\), which depend only on \(u_0^2 + w_0^2\), such that for any \(N\) satisfying \(N \epsilon_0^3 \leq K_0\), the following holds: for all \(n = 0, 1, \dots, N\) and all \(\epsilon \in [0, \epsilon_0]\), we have
\begin{equation}
\label{solrecTp}
u_n^2 + w_n^2 = u_0^2 + w_0^2 + n\epsilon^3\delta_n
\end{equation}
where \(|\delta_n| \leq M\).

Furthermore, with \(\phi_0\) given by 
\begin{equation}
\label{defphi0}
e^{i\phi_0} = \frac{u_0 + iw_0}{\sqrt{u_0^2 + w_0^2}},
\end{equation}
we have
\begin{equation}
\label{unwn00}
u_n + iw_n = \sqrt{u_0^2 + w_0^2}\, e^{i\phi_n} \left(1 + n\epsilon^3\eta_n\right)
\end{equation}
with
\begin{equation}
\label{formphin}
\phi_n = \phi_0 +  \dfrac{14 \pi}{3}\epsilon^2\sum_{k=0}^{n-1}\sqrt{h^2 - (u_k^2 + w_k^2)}
\end{equation}
and
\begin{equation}
\label{estimetan}
|\eta_n| \leq M.
\end{equation}

For \(n\) slightly smaller, such that \(n\epsilon^{5/2} \ll 1\), and for sufficiently small \(\epsilon\), Formula \eqref{formphin} simplifies to 
\begin{equation}
\label{unwnsmaller100}
\phi_n = \phi_0 +  \dfrac{14 \pi}{3} n\epsilon^2\sqrt{h^2 - (u_0^2 + w_0^2)} + n^2\epsilon^5\eta_n',
\end{equation}
where \(\eta_n'\) is bounded by constants depending only on \(u_0^2 + w_0^2\).

\end{Theorem}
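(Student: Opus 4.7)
The plan is to compute the Poincar\'e map of the slow variables $(u,w)$ explicitly to order $\epsilon^2$ using the perturbation expansion \eqref{xyperturbseries} (with the $O(\epsilon^2)$ coefficients from Appendix~\ref{epsilon square term}), identify its leading part as a rotation in the $(u,w)$ plane, and then iterate with careful control of the accumulated error. Concretely, since the first return time to the section $\{y=0\}$ is $T=2\pi+O(\epsilon^2)$ by \eqref{time one loop}, I substitute the expansion into $u=h-(y^2+\dot y^2)$ and $w=\dot x\dot y+xy$ at $t=T$ and read off
\[
u_1+iw_1 \;=\; (u_0+iw_0)\,e^{\,i\epsilon^2\alpha(u_0,w_0)} \;+\; R_1,\qquad |R_1|\le C\epsilon^3,
\]
where $\alpha(u,w)=\tfrac{14\pi}{3}\sqrt{h^2-(u^2+w^2)}$. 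The prefactor $14\pi/3$ should match the coefficient already appearing in \eqref{time one loop}, which gives a useful internal consistency check.

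The algebraic key is the identity $h^2-(u^2+w^2)=x^2\dot y^2$ holding on the section $\{y=0\}$. This is a direct computation from \eqref{hepsilon} with $y=0$ and $\epsilon$-terms eliminated: writing $h=\tfrac12(x^2+\dot x^2+\dot y^2)$, $u=\tfrac12(x^2+\dot x^2-\dot y^2)$, $w=\dot x\dot y$, one expands and collects. This identity turns $\alpha(u_0,w_0)$ into the intrinsic angular increment $\tfrac{14\pi}{3}|x_0\dot y_0|$ appearing in \eqref{formphin}, and it also clarifies why the assumption \eqref{condu0w0} (i.e.\ $x_0\dot y_0\ne 0$) is the correct nondegeneracy hypothesis: it keeps the map strictly rotational and bounded away from the singular circle where $\dot y=0$.

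To control the iterated map, I note that the leading rotation preserves $\rho_n^2:=u_n^2+w_n^2$ exactly, so the drift of $\rho_n^2$ comes only from the $O(\epsilon^3)$ remainder, yielding $\rho_n^2=\rho_0^2+n\epsilon^3\delta_n$ with $|\delta_n|\le M$; this is \eqref{solrecTp}. The restriction $N\epsilon_0^3\le K_0$ and the strict inequality \eqref{condu0w0} ensure that throughout the iteration $\rho_n^2$ stays inside a compact subset of $(0,h^2)$, so the constants in the single-step estimate remain uniform. Iterating $u_{n+1}+iw_{n+1}=(u_n+iw_n)e^{\,i\epsilon^2\alpha(u_n,w_n)}(1+O(\epsilon^3))$ and comparing with the pure rotation gives \eqref{unwn00}--\eqref{estimetan}, where $\phi_n$ is the telescoped sum \eqref{formphin}. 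For the shorter-time regime $n\epsilon^{5/2}\ll 1$, replacing $\alpha(u_k,w_k)$ by $\alpha(u_0,w_0)$ costs $\epsilon^2\cdot(\rho_k^2-\rho_0^2)/\sqrt{h^2-\rho_0^2}=O(k\epsilon^5)$ per term, summing to the $n^2\epsilon^5\eta_n'$ error in \eqref{unwnsmaller100}. The degenerate cases (i) with $\dot y_0=0$ and (ii) with $u_0=w_0=0$ correspond respectively to $\alpha=\tfrac{14\pi}{3}h$ with $x_0\dot y_0=0$ and $\rho_0=0$; there the leading rotation either acts trivially on a zero-length vector or leaves nothing to rotate, so only the $O(\epsilon^3)$ remainder contributes, giving the $O(n\epsilon^3)$ (respectively $O(n\epsilon^4)$) bounds stated.

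The main obstacle is not deriving the one-step formula (routine but tedious, done through Appendix~\ref{epsilon square term}) but propagating it to $n$ of order $\epsilon^{-3}$. The per-step error is $O(\epsilon^3)$, and I need a discrete Gronwall/contraction argument on the annulus $\rho_0^2-\tau\le \rho_n^2\le \rho_0^2+\tau$ (with $\tau$ small enough to avoid $\rho=0$ and $\rho=h$) showing that the errors in $\rho_n^2$ add, rather than multiply, so that the relative amplitude error stays $O(n\epsilon^3)$ and the phase error stays $O(n\epsilon^3)$ as well. This rests on the rotation being an isometry to leading order, so the linearization of the map at each step has norm $1+O(\epsilon^3)$, preventing exponential blow-up in the iteration.
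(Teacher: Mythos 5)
Your overall picture---one-step map is approximately a rotation in the $(u,w)$ plane by angle $\epsilon^2\alpha(u,w)$ with $\alpha=\tfrac{14\pi}{3}\sqrt{h^2-(u^2+w^2)}$, plus an $O(\epsilon^3)$ error to be iterated---is the same as the paper's, and your identity $h^2-(u^2+w^2)=x^2\dot y^2$ on the section is correct (the paper records it right before the theorem). But your route to the one-step map is genuinely different: you propose to evaluate $u,w$ at the first return time $T(x_0,\dot x_0,\dot y_0,\epsilon)$ using the $t$-expansion of Appendix~\ref{epsilon square term}, whereas the paper never computes a return time. Instead it changes independent variable from $t$ to $y$, integrates the resulting slow system \eqref{integralsystem} along a fixed complex contour $\mathcal{C}_{v_0}$ encircling both branch points $\pm\sqrt{v_0}$, and proves the $O(\epsilon^3)$ remainder bound via contraction mapping (Lemmas~\ref{existence}, \ref{L2}). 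The contour approach buys exactly what your sketch lacks: the return ``section'' is a fixed closed loop, so there is no implicit equation $y(T)=0$ to solve, no $\dot y_0$ appearing in a denominator, and the Banach-space contraction gives the uniform remainder estimate directly.

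The gap in your proposal is the one-step error bound $|R_1|\le C\epsilon^3$. Appendix~\ref{epsilon square term} only gives the $\epsilon$ and $\epsilon^2$ coefficients of $x(t),y(t)$; it does not provide, nor is it ``routine'' to obtain from it, a remainder bound that is uniform in $(u_0,w_0)$ over a full compact annulus of the section. That uniformity is the load-bearing issue, because the constant must not deteriorate as the iterates $(u_n,w_n)$ wander---this is precisely the content of the paper's Lemma~\ref{boundingremainders}, a separate induction coupling the remainder bound of Lemma~\ref{L2} with the $T_n$-tracking of Lemma~\ref{Tn}. You also have to handle the implicit determination of $T$ (which itself carries $O(\epsilon^3)$ ambiguity), and this interacts with the error in $u,w$.

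A second, substantive error: the linearization of the one-step map in the $(u,w)$-plane has operator norm $1+O(\epsilon^2)$, not $1+O(\epsilon^3)$. Differentiating $(u+iw)\mapsto (u+iw)e^{i\epsilon^2\alpha(u^2+w^2)}$ produces terms of order $\epsilon^2\alpha'$, so the Jacobian is a rotation plus an $O(\epsilon^2)$ correction. If you iterated $n\sim\epsilon^{-3}$ times with naive Gronwall you would accumulate a factor $(1+C\epsilon^2)^{\epsilon^{-3}}$, which blows up. What saves the argument---and what the paper uses---is not the norm estimate you state but the structure of the map: in ``polar'' variables the Jacobian is approximately triangular with unit diagonal, so the amplitude $T_n=u_n^2+w_n^2$ obeys a recursion with genuinely $O(\epsilon^3)$ increments; and the phase $\phi_n$ is then \emph{defined} by the telescoped sum \eqref{formphin} over the \emph{exact} iterates $T_k$, rather than approximated by a closed formula. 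Only the simplified formula \eqref{unwnsmaller100} replaces $T_k$ by $T_0$, and this is where the $n^2\epsilon^5$ error enters (which you correctly identify). If you instead try to approximate $\phi_n$ directly by a pure rotation through the whole range $n\epsilon^3=O(1)$, the phase error is $O(n^2\epsilon^5)=O(\epsilon^{-1})$, which is not small. So your argument as written does not give \eqref{unwn00}--\eqref{formphin}; to get it you need the paper's device of factoring out $\prod_k C_k$ and keeping $\phi_n$ implicit.

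Finally, the ``internal consistency check'' with \eqref{time one loop} is a distraction: setting $y_0=0$ there gives $T=2\pi+\tfrac{\pi\epsilon^2}{6}(-9x_0^2+5\dot x_0^2+5\dot y_0^2)+O(\epsilon^3)$, which is not the rotation angle $\tfrac{14\pi}{3}\epsilon^2|x_0\dot y_0|$; the $14$ appearing in both is a coincidence of unrelated coefficients.
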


\begin{Remark}
Since \(\phi_n\) is real, separating the real and imaginary parts in \eqref{unwn00} gives
\begin{equation}
\label{sincos00}
\begin{array}{l}
u_n = \sqrt{u_0^2 + w_0^2}\, \cos \phi_n + n\epsilon^3\delta'_{1,n},\\
\\
w_n = \sqrt{u_0^2 + w_0^2}\, \sin \phi_n + n\epsilon^3\delta'_{2,n}, \\
\end{array}
\end{equation}
where \(\delta_{j,n}, \delta'_{j,n}\) are bounded by constants depending only on \(u_0^2 + w_0^2\).

The exponential form in \eqref{unwn00} shows the errors written in multiplicative form.
\end{Remark}

\begin{Remark}
 
  Note that the value of \(n\) for which  \(n\epsilon^{5/2} \ll 1\) can be significantly larger than the value at which secular terms become significant, as the latter corresponds to \(n\epsilon^2 = O(1)\).

\end{Remark}

 
\section{Proof of Theorem\,\ref{mainth}}\label{pfth}

The proof of (i) is found in \S\ref{Pfofi}. The rest of this section is dedicated to the proof of (ii) and (iii) and therefore we assume that $\dot{y}_0\ne 0$ (hence $v_0>0$).
 We need the following preparations and several lemmas, based on which the proof is concluded in \S\ref{Pfofii}, \S\ref{Pfofiii} and \S\ref{Pfofiii2}.

\subsection{Preparation}\label{preparation}

We now revert to the variable \(v\), as defined in \eqref{vw} at the beginning of this section. (Recall that \(u = h - v\).)

 The system \eqref{perturbedsystem} naturally extends to the complex domain, and solutions are analytic {in $t,\epsilon$ and initial conditions}.

We change variables (in the complex domain): pass from the variables $x,\dot{x},y,\dot{y}$ (dependent) and $t$ (independent) to the variables
$h, v, w, t $ (dependent) and $y$ (independent).

In these new variables  \eqref{perturbedsystem} becomes 
\begin{equation}
\label{system hvwt}
\dfrac{dt}{dy} = \dfrac{1}{\sqrt{v-y^2}}, \ \ \dfrac{dh}{dy} = 0,\  \ \dfrac{dv}{dy} = -2\epsilon(x^2-y^2), \ \  \dfrac{dw}{dy} = -\epsilon \left(\dfrac{x^2-y^2}{\sqrt{v-y^2}} \,\Dot{x}+ 2xy \right),
\end{equation}
where  $x=x(y,v,w)$ is given by \footnote{Since $h$ is a  constant we omit marking the dependence on it.}
\begin{equation}
\label{formulax}
x=x(y,v,w;\epsilon)=\frac {1}{ v+2\epsilon y\left( v-{y}^{2}\right) }\left( \,wy+
\sqrt {v-{y}^{2}} \sqrt {\mathcal{S}} \right)
\end{equation}
with
\begin{equation}
\label{formS}
\mathcal{S}=
2\,hv-{v}^{2}-{w}^{2}+2\epsilon y\left(2\,hv-{v}^{2}-{w}^{2}\right)+\frac43\,{
\epsilon}^{2}y^4(v-y^2)-4\,\epsilon\, \left( h- \frac{2}{3}\,v \right) {y}^{3}
\end{equation}
and
\begin{equation}
\label{formuladx}
\dot{x}=\dot{x}(y,v,w;\epsilon)=\frac{w-x(y,v,w;\epsilon)y}{\sqrt{v-y^2}}.
\end{equation}
The initial conditions \eqref{indcond} become
\begin{equation}\label{incondvw}
v_0:=v|_{y=0}=\dot{y}_0^2,\ \ w_0:=w|_{y=0}=\dot{x}_0\dot{y}_0,\  \ t|_{y=0}=0,\ \ h|_{y=0}=h
\end{equation}
We see in \eqref{system hvwt} that the derivatives of $v$ and $w$ are of order $\epsilon$: they are slow variables.

Solutions of \eqref{system hvwt} with initial conditions \eqref{incondvw} satisfy the system of integral equations:
\begin{align}
\label{integralsystem}
\begin{split}
v(y) &=   v_0 + \epsilon \int_{0}^{y} F(s, v(s),w(s),\epsilon)ds, \ \ \ \ \ \ \text{where }\ \ F(s, v(s),w(s),\epsilon) = 2(-x(s)^2+s^2)\\\
w(y) &=    w_0 + \epsilon \int_{0}^{y} G(s,v(s),w(s),\epsilon) ds, \ \ \ \text{where }\ \  G(s,v(s),w(s),\epsilon) = -\left[ \dfrac{(x(s)^2 - s^2)\dot{x}(s)}{\sqrt{v(s)-s^2}} + 2x(s)s \right]\\
\end{split}
\end{align}
and
\begin{align}
\label{integralsystemth}
\begin{split}
t(y) &= \int_{0}^{y} \dfrac{ds}{\sqrt{v(s)-s^2}}, \\
h(y) &= h,
\end{split}
\end{align}
Here, \(x(s) := x(s, v(s), w(s); \epsilon)\) is given by \eqref{formulax}, and \(\dot{x}(s) := \dot{x}(s, v(s), w(s); \epsilon)\) is given by \eqref{formuladx}. The path of integration in the complex \(y\)-plane is chosen to ensure the observed \(2\pi\)-periodicity up to \(O(\epsilon)\) and maintain analyticity.
As a motivation of our choice, note that expanding in series in $\epsilon$ the system \eqref{integralsystem}, we see that   $v(y)=v_0+O(\epsilon)$ therefore $t(y)=\displaystyle \int_0^y\frac1{\sqrt{v_0-s^2}}ds+O(\epsilon)$ and the path of integration should go around both singularities $s=\pm\sqrt{v_0}$ of integrand. \newline 
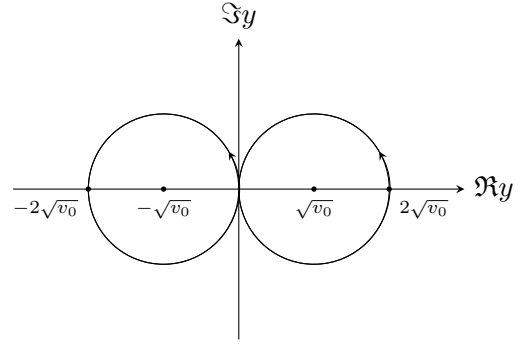
\begin{wrapfigure}[12]{r}{0.4\textwidth}
\vspace{-50pt}
\begin{tikzpicture}[>=stealth]
    
    \draw[->] (-3,0) -- (3,0) node[right] {$\Re y$};
   
    \draw[->] (0,-2) -- (0,2) node[above] {$\Im y$};
    
    \draw (1,0) circle ({sqrt(1)});
    \draw (-1,0) circle ({sqrt(1)});
    
    \fill (1,0) circle (1pt) node[below, font=\tiny] {$\sqrt{v_0}$};
    \fill (-1,0) circle (1pt) node[below, font=\tiny] {$-\sqrt{v_0}$};

    \fill (2,0) circle (1pt) node[below right, font=\tiny] {$2\sqrt{v_0}$};
    \fill (-2,0) circle (1pt) node[below left, font=\tiny] {$-2\sqrt{v_0}$};

    \draw[->] ({1 + cos(30)},{sin(30)}) arc (30:360+30:{1});
    \draw[->] ({-1 + cos(30)},{sin(30)}) arc (30:360+30:{sqrt(1)});
\end{tikzpicture}
\caption{The curve of integration $\mathcal{C}_{v_0}$}
\label{curve}
\end{wrapfigure}
To be precise, we consider the path of integration to be the curve $\mathcal{C}_{v_0}$ defined as follows: starting at $y=0$ the path goes counterclockwise along a circle centered at $\sqrt{v_0}$ and of radius $\sqrt{v_0}$ followed, counterclockwise, by the circle centered at $y = -\sqrt{v_0}$ and of radius $\sqrt{v_0}$, see figure\,\ref{curve}. 

Integrating once around $\mathcal{C}_{v_0}$ we obtain the Poincar\'e map. Denote by $v_1,w_1$ the values of $v(y),w(y)$ after integrating once along $\mathcal{C}_{v_0}$:\vskip 0.5cm
\begin{equation}\label{v1w1}
    v_1=v_0 + \epsilon \oint_{\mathcal{C}_{v_0}} F(s, v(s),w(s),\epsilon)ds, 
w_1=  w_0 + \epsilon \oint_{\mathcal{C}_{v_0}} G(s,v(s),w(s),\epsilon) ds
\end{equation}
and, the time to complete a loop is
$$ t_1 = \oint_{\mathcal{C}_{v_0}} \dfrac{ds}{\sqrt{v(s)-s^2}}
$$

Continuing integration along subsequent loops $\mathcal{C}_{v_0}$  we obtain an iterated  Poincar\'e map.  Along  the first loop $v(y)$ and $w(y)$ are determined as solutions of the integral system $v(y)=v_0 + \epsilon \displaystyle\int_0^y F(s, v(s),w(s),\epsilon)ds$ and $w(y)=  w_0 + \displaystyle\epsilon \int_0^y G(s,v(s),w(s),\epsilon) ds$ which is proven to have a unique solution in Lemma\,\ref{existence}.
With these values for $v(y),w(y)$, at the end of the first loop we have \eqref{v1w1}. Continuing the integration and applying Lemma\,\ref{existence} on subsequent loops, we find that on the \((n+1)\)-th loop, \(v(y)\) and \(w(y)\) are the unique solutions of the integral system:
\begin{equation}\label{sysvwny}
v(y)=v_n + \epsilon \int_0^y F(s, v(s),w(s),\epsilon)ds, \ \ \ \
w(y)=  w_n + \epsilon \int_0^y G(s,v(s),w(s),\epsilon) ds
\end{equation}
Here, we denote by \(v_{n}\) and \(w_{n}\) the values of \(v\) and \(w\) at the end of the \(n\)-th loop. Consequently, at the end of the \((n+1)\)-th loop, we have:
\begin{equation}\label{vnloop}
v_{n+1}=v_n + \epsilon \oint_{\mathcal{C}_{v_0}} F(s, v(s),w(s),\epsilon)ds, \ \ \ \
w_{n+1}=  w_n +\epsilon \oint_{\mathcal{C}_{v_0}} G(s,v(s),w(s),\epsilon) ds
\end{equation}

\subsection{Epsilon-expansion with remainder}
We expand $v(y)$ and $w(y)$ in $\epsilon$ up to $O(\epsilon^3)$ and keep track of remainders. Substituting
\begin{equation}
    \label{vwhigherExpansion}
    \begin{split}
        v(y) &= v^{[0]}(y)+\epsilon v^{[1]}(y) + \epsilon^2 v^{[2]}(y) + \epsilon^3 R(y), \\
    w(y) &= w^{[0]}(y)+\epsilon w^{[1]}(y) + \epsilon^2 w^{[2]}(y) + \epsilon^3 S(y).
    \end{split}
\end{equation}
in \eqref{integralsystem} (where the remainders $R,S$ also depend on $\epsilon$), expanding and identifying the powers of $\epsilon$ we obtain
$$v^{[0]}(y) = v_0,\ \ \ w^{[0]}(y) = w_0$$
and
\begin{align*}
    v^{[1]}(y)&=\int_0^yF(s,v_0,w_0,0)\, ds, \ \ \ \ w^{[1]}(y)=\int_0^yG(s,v_0,w_0,0)\, ds, \\
    v^{[2]}(y)&= \int_0^y\left[F_v(s,v_0,w_0,0)v^{[1]}(s)+F_w(s,v_0,w_0,0)w^{[1]}(s)+F_\epsilon(s,v_0,w_0,0)\right]\, ds, \\
    w^{[2]}(y)&= \int_0^y\left[G_v(s,v_0,w_0,0)v^{[1]}(s)+G_w(s,v_0,w_0,0)w^{[1]}(s)+G_\epsilon(s,v_0,w_0,0)\right]\, ds.
\end{align*}
from which the terms $v^{[1]}(y)$, $w^{[1]}(y)$, $v^{[2]}(y)$, and $w^{[2]}(y)$ are calculated explicitly; they are given in the appendix \ref{vwExpansions}.

 \subsubsection{Remainders} To derive the equations satisfied by the remainders \(R\) and \(S\), we rewrite \eqref{vwhigherExpansion} in the form:
\begin{equation}\label{defxi}
    v(y) = v_0 + \epsilon \xi_v(y), \ \ \ \ w(y) = w_0 + \epsilon \xi_w(y),
\end{equation}
where $\xi_v(y) = v^{[1]}(y) + \epsilon v^{[2]}(y) + \epsilon^2 R(y)$ and $\xi_w(y) = w^{[1]}(y) + \epsilon w^{[2]}(y) + \epsilon^2 S(y)$. Using \eqref{defxi}, the functions \(F\) and \(G\) (with explicit formulas provided in \eqref{integralsystem}) admit \(\epsilon\)-expansions with remainders:
\begin{equation} \label{ExpansionFG}
    \begin{split}
        F(y,v_0 + \epsilon \xi_v(y),w_0 + \epsilon \xi_w(y),\epsilon) &= \overset{\circ}{F}+\epsilon\xi_v \overset{\circ}{F_v}+\epsilon\xi_w\overset{\circ}{F_w}+\epsilon\overset{\circ}{F_\epsilon}
    +\epsilon^2 \mathcal{Q}_F, \\
    G(y,v_0 + \epsilon \xi_v(y),w_0 + \epsilon \xi_w(y),\epsilon) &= \overset{\circ}{G}+\epsilon\xi_v \overset{\circ}{G_v}+\epsilon\xi_w\overset{\circ}{G_w}+\epsilon\overset{\circ}{G_\epsilon}
    +\epsilon^2 \mathcal{Q}_G,
    \end{split}
\end{equation}
where $\overset{\circ}{f}$ denotes the function $f$ evaluated at $(y,v_0,w_0,0)$ and $\mathcal{Q}_F$, $\mathcal{Q}_G$ are quadratic forms in $\langle \xi_v,\xi_w,1\rangle$ whose coefficients are integrals, in $\lambda$ on $[0,1]$, of $1-\lambda$ multiplying second derivatives of $F$ evaluated at $(y, v_0+\lambda \epsilon\xi_v,w_0+\lambda \epsilon\xi_w, \lambda\epsilon)$; hence $ \mathcal{Q}_F= \mathcal{Q}_F(y,\epsilon,\epsilon^2 R,\epsilon^2 S)$,  $ \mathcal{Q}_G= \mathcal{Q}_G(y,\epsilon,\epsilon^2 R,\epsilon^2 S)$ and they are analytic in $\epsilon$.

Substituting \eqref{vwhigherExpansion}, \eqref{ExpansionFG} in \eqref{integralsystem} we obtain integral equations for $R,S$:
\begin{equation}\label{eqRS}
    \begin{split}
        R(y) &= \int_0^y [f_1(s)+ \epsilon f_2(s,\epsilon R(s), \epsilon S(s)) ] ds:=\mathcal{I}_1(R,S)(y), \\
        S(y) &= \int_0^y [g_1(s)+ \epsilon g_2(s,\epsilon R(s), \epsilon S(s)) ] ds:= \mathcal{I}_2(R,S)(y).
    \end{split}
\end{equation}
where 
\begin{equation}
\label{formf12}
\begin{array}{l}
f_1=\frac12 \overset{\circ}{F_{vv}}v^{[1]}\,^2+\frac12 \overset{\circ}{F_{ww}}w^{[1]}\,^2+ \overset{\circ}{F_{vw}}v^{[1]}w^{[1]}+\overset{\circ}{F_{v\epsilon}}v^{[1]}+\overset{\circ}{F_{w\epsilon}}w^{[1]}+\frac12 \overset{\circ}{F_{\epsilon\epsilon}},   \\ \\
f_2= \overset{\circ}{F_v}\left( v^{[2]}+\epsilon R\right)+\overset{\circ}{F_w}\left( w^{[2]}+\epsilon S\right)+
 \mathcal{Q}_F(y,\epsilon,\epsilon^2 R,\epsilon^2 S).
\end{array}
 \end{equation}
The functions $g_1,g_2$ are similar.

Existence and uniqueness of solutions $R(y),S(y)$ of the integral system \eqref{eqRS} where the integral goes along $\mathcal{C}_{v_0}$ is proved in Lemma\,\ref{L2}.

\subsubsection{Values after one loop}
Let us denote the values of $v^{[1]}(y)$, $w^{[1]}(y)$, $v^{[2]}(y)$, and $w^{[2]}(y)$ at $y = 0$ after a full loop along $\mathcal{C}_{v_0}$ by $AC v^{[1]}(0)$, $AC w^{[1]}(0)$, $AC v^{[2]}(0)$, and $AC w^{[2]}(0)$, respectively. A direct inspection of the expressions in appendix \ref{vwExpansions} shows that
\begin{align*}
    AC v^{[1]}(0) &= v^{[1]}(0)=0, \  \ \ 
    AC w^{[1]}(0) = w^{[1]}(0)=0, \\
    AC v^{[2]}(0) &= \dfrac{14 \pi}{3} w_0 \sqrt{2hv_0 - v_0^2 - w_0^2}, \\
    AC w^{[2]}(0) &= \dfrac{14 \pi}{3} (h-v_0) \sqrt{2hv_0 - v_0^2 - w_0^2}.
\end{align*}
Therefore, after one loop the values at $y=0$ of $v$ and $w$ in \eqref{vwhigherExpansion} become
\begin{equation}\label{loopone}
\begin{split}
v_1:=&v_0+\epsilon^2\, \dfrac{14 \pi}{3} w_0 \sqrt{2hv_0 - v_0^2 - w_0^2}+\epsilon^3 R_0,\\
w_1:=&w_0+\epsilon^2\,\dfrac{14 \pi}{3} (h-v_0) \sqrt{2hv_0 - v_0^2 - w_0^2}+\epsilon^3S_0
\end{split}
\end{equation}
where $R_0,S_0$ denote the values of $R,S$ after one loop.

\begin{Remark}\label{remarkreal}
    We use real initial conditions. After integration in the complex plane, along $\mathcal{C}_{v_0}$, the values $v_1$, $w_1$ are, again, real numbers (as they must coincide with the values of the solution obtained in the real domain). 
\end{Remark}

\subsection{Heuristics}
As long as  we can continue integrating on subsequent loops integration, (i.e., as long as Lemmas\,\ref{existence} and\,\ref{L2} remain applicable), after the \((n+1)\)-th loop, we have \eqref{vnloop}. There, using \eqref{loopone} for the \((n+1)\)-th loop, we obtain:
\begin{equation}
\label{recurrencevwR}
\begin{split}
v_{n+1} - v_n &= \dfrac{14 \pi}{3}  \epsilon^2\,w_n \sqrt{2hv_n -v_n^2 - w_n^2}\ + \epsilon^3R_n \\
w_{n+1} - w_n &= \dfrac{14 \pi}{3} \epsilon^2\, (h-v_n)\sqrt{2hv_n - v_n^2 - w_n^2}\  + \epsilon^3 S_n
\end{split}
\end{equation}
To solve \eqref{recurrencevwR}, approximately, to order $O(\epsilon^3)$, it is natural to look at the differential system  
\begin{equation}
\begin{split}
\dfrac{dV}{dn} &=  \dfrac{14 \pi}{3}\, \epsilon^2\,W \sqrt{2hV -V^2 - W ^2} , \\
\dfrac{dW}{dn} &= \dfrac{14 \pi}{3} \, \epsilon^2\, (h-V) \sqrt{2hV - V^2 - W^2}  
\end{split}
\end{equation}
which has the solutions \begin{equation}
\label{tvntwn}
\begin{split}
V(n) &= h + A \sin(n \beta \epsilon^2 + B), \\
W(n) &= A \cos(n \beta \epsilon^2 + B), \ \ \ \text{where } \beta=\frac{14 \pi \sqrt{k_0}}{3}
\end{split}
\end{equation}
where the constant quantity $k_0 = 2hV-V^2-W^2$ and the constants of integration $A,\, B$ are determined from the initial conditions: $V(0)=v_0,\ W(0)=w_0$.

\
In the following, we prove that this  picture is correct: the iteration can be continued for \(n\) as stated in Theorem\,\ref{mainth}, that \(V(n)\) and \(W(n)\) indeed approximate \(v_n\) and \(w_n\), respectively, and we provide error estimates.
\subsection{Solutions along one loop}
Lemma\, \ref{existence} proves the existence, form and bounds of solutions of \eqref{integralsystem} along one loop. Lemma\,\ref{L2} gives estimates for the $O(\epsilon^3)$ remainders. Since $v(y)$ and $w(y)$ will have different values at the end of the loop $\mathcal{C}_{v_0}$, let us denote by $\overline{\mathcal{C}}_{v_0}$ the open path obtained from $\mathcal{C}_{v_0}$ by making the starting point $y=0$ and the endpoint (also $y=0$) distinct.

\subsection{Solutions Along One Loop}

Lemma\,\ref{existence} establishes the existence, form, and bounds of solutions to \eqref{integralsystem} along a single loop. Lemma\,\ref{L2} provides estimates for the \(O(\epsilon^3)\) remainders. Since \(v(y)\) and \(w(y)\) take on different values at the end of the loop \(\mathcal{C}_{v_0}\), we denote by \(\overline{\mathcal{C}}_{v_0}\) the open path derived from \(\mathcal{C}_{v_0}\)  by making the starting point $y=0$ and the endpoint (which is also $y=0$) distinct.

\begin{lemma} \label{existence} 
Fix \(h > 0\). Assume \(v_0 > 0\) and \(w_0 \in \mathbb{R}\) satisfy\footnote{These are the assumptions of Theorem\,\ref{mainth} (iii), since \(u = h - v\).}
\begin{equation}
    \label{initialconditionsassumptions0}
    (h - v_0)^2 + w_0^2 < h^2,
\end{equation}
and let \(c_0\) be a positive constant such that
\begin{equation}
    \label{initialconditionsassumptions}
    (h - v_0)^2 + w_0^2 < c_0^2 < h^2.
\end{equation}

Then the solution of the system \eqref{integralsystem} with initial conditions \((v_0, w_0)\) satisfies the following:

\begin{enumerate}[label=(\roman*)]
    \item The system of integral equations \eqref{integralsystem} has a unique solution for \(y\) surrounding \(\overline{\mathcal{C}}_{v_0}\) once, provided \(\epsilon\) is sufficiently small: \(0 < \epsilon \leq \epsilon_0\), where \(\epsilon_0\) depends only on \(c_0\).
    
    More precisely, the solution takes the form \(v(y) = v_0 + \epsilon \tilde{v}(y)\), \(w(y) = w_0 + \epsilon \tilde{w}(y)\), with \(|\tilde{v}(y)| \leq \tilde{M}\) and \(|\tilde{w}(y)| \leq \tilde{M}\), where \(\tilde{M}\) is a constant depending only on \(c_0\). 
    
    \item The solution is continuous with respect to the parameters \(v_0\), \(w_0\), and \(\epsilon\) for \(v_0, w_0\) in the domain defined by \eqref{initialconditionsassumptions} and \(\epsilon\) satisfying \(0 \leq \epsilon \leq \epsilon_0\). Furthermore, the solution is analytic in \(\epsilon\) at \(\epsilon = 0\).
\end{enumerate}
\end{lemma}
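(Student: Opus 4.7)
The plan is to solve the integral system \eqref{integralsystem} by a Banach fixed-point argument on the space of continuous functions along $\overline{\mathcal{C}}_{v_0}$. Fix $c_0$ as in \eqref{initialconditionsassumptions}, let $\mathcal{X} = C(\overline{\mathcal{C}}_{v_0}; \CC^2)$ with the sup norm, and choose a radius $M = M(c_0, h) > 0$ to be determined below. On the closed ball
\begin{equation*}
B_M = \{(v, w) \in \mathcal{X} : \|v - v_0\|_\infty \leq M,\ \|w - w_0\|_\infty \leq M\}
\end{equation*}
I would consider the operator
\begin{equation*}
\mathcal{T}(v, w)(y) = \Bigl( v_0 + \epsilon \int_0^y F(s, v(s), w(s), \epsilon)\,ds,\ w_0 + \epsilon \int_0^y G(s, v(s), w(s), \epsilon)\,ds \Bigr),
\end{equation*}
with integration along $\overline{\mathcal{C}}_{v_0}$ from $0$ to $y$. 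A fixed point of $\mathcal{T}$ in $B_M$ is exactly a solution of \eqref{integralsystem} around one loop.

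The technical heart is uniform bounds on $F$, $G$ and their $(v, w)$-partials on $\overline{\mathcal{C}}_{v_0} \times B_M$ for all small $\epsilon$, read off from the explicit formulas \eqref{formulax}, \eqref{formuladx}, \eqref{formS}. Three ingredients enter. First, $|v_0 - s^2|$ is uniformly bounded below on $\mathcal{C}_{v_0}$: parameterizing the first circle as $s = \sqrt{v_0}(1 + e^{i\theta})$ gives $|v_0 - s^2| = v_0 |2 + e^{i\theta}| \geq v_0$, and similarly on the second; hence for $M \leq v_0/2$ one has $|v - s^2| \geq v_0/2$ uniformly on $B_M$. Second, the denominator $v + 2\epsilon s(v - s^2)$ in \eqref{formulax} differs from $v$ by an $O(\epsilon)$ quantity uniformly (since $|s| \leq 2\sqrt{v_0}$ on the contour), so it is bounded below by $v_0/4$ for $\epsilon$ small. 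Third, at the unperturbed values, $\mathcal{S}\big|_{v=v_0, w=w_0, \epsilon=0} = 2hv_0 - v_0^2 - w_0^2$, which is strictly positive by \eqref{initialconditionsassumptions0}; for small $\epsilon$ and $(v, w) \in B_M$, $\mathcal{S}$ remains in a small neighborhood of this constant uniformly on $\overline{\mathcal{C}}_{v_0}$, so the branch of $\sqrt{\mathcal{S}}$ equal to the positive square root at $y = 0$ extends analytically along the contour and stays uniformly bounded above and below. Combining these, $F$, $G$ and their $(v, w)$-partials are uniformly bounded by a constant $C_0 = C_0(c_0, h)$.

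The length of $\overline{\mathcal{C}}_{v_0}$ being $4\pi\sqrt{v_0}$, one immediately gets $\|\mathcal{T}(v, w) - (v_0, w_0)\|_\infty \leq 4\pi\sqrt{v_0}\, C_0\, \epsilon$, so $\mathcal{T}$ maps $B_M$ into itself as soon as $\epsilon \leq \epsilon_0 := M/(4\pi\sqrt{v_0}\, C_0)$; the analogous Lipschitz estimate on $F, G$ produces $\|\mathcal{T}(v, w) - \mathcal{T}(v', w')\|_\infty \leq 4\pi\sqrt{v_0}\, C_0\, \epsilon\, \|(v, w) - (v', w')\|_\infty$, a strict contraction after possibly shrinking $\epsilon_0$. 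Banach's theorem yields the unique fixed point, and the fixed-point equation itself gives the form $v = v_0 + \epsilon \tilde v$, $w = w_0 + \epsilon \tilde w$ with $\|\tilde v\|_\infty, \|\tilde w\|_\infty \leq \tilde M := 4\pi\sqrt{v_0}\, C_0$, proving (i). For (ii), since the contour depends on $v_0$, I would reparameterize by a fixed $\theta \in [0, 4\pi]$ via $s(\theta; v_0) = \sqrt{v_0}(1 + e^{i\theta})$ on $[0, 2\pi]$ and analogously on the second circle, turning the problem into a fixed-point problem in $C([0, 4\pi]; \CC^2)$ whose integrand depends jointly continuously on $(v_0, w_0, \epsilon)$; continuous dependence of the fixed point is then the standard Banach-with-parameters theorem, and analyticity in $\epsilon$ at $\epsilon = 0$ follows either from the analytic implicit function theorem applied to $(v, w, \epsilon) \mapsto (v, w) - \mathcal{T}(v, w)$ at $(v_0, w_0, 0)$, or from Weierstrass applied to the Picard iterates, which are polynomials in $\epsilon$ converging uniformly on a small disk.

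The principal obstacle I anticipate is the clean verification of the three lower-bound ingredients above, especially maintaining a single-valued analytic branch of $\sqrt{\mathcal{S}}$ along the closed contour in the presence of $\epsilon$-dependent corrections, and keeping the denominator of $x$ uniformly bounded away from zero throughout $\overline{\mathcal{C}}_{v_0}$. This is precisely where the assumptions \eqref{initialconditionsassumptions0} and \eqref{initialconditionsassumptions} are used; once those estimates are in place, the remainder of the argument is routine contraction-mapping and analyticity machinery.
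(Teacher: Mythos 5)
Your proposal is correct and follows essentially the same route as the paper's proof: a Banach fixed-point argument on $C(\overline{\mathcal{C}}_{v_0};\CC^2)$, with the same three uniform lower bounds ($|v_0-y^2|\ge v_0$, the denominator $v+2\epsilon y(v-y^2)$ near $v_0$, and $\mathcal{S}$ near $\mathcal{S}_0>0$) driving both the ball-invariance and the contraction estimates. The only thing to tidy up is that your $\tilde M = 4\pi\sqrt{v_0}\,C_0$ and $\epsilon_0 = M/(4\pi\sqrt{v_0}\,C_0)$ as written still reference $v_0$; using $h-c_0<v_0<h+c_0$ (which follows from \eqref{initialconditionsassumptions}) makes them depend only on $c_0$ as the lemma requires, exactly as the paper does.
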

\begin{proof} \label{PfL1}

{
We focus on the integral equations for $v$ and $w$ in \eqref{integralsystem}, as the behavior of $t$ is an immediate consequence (see Corollary \ref{timeInt}). These equations are
    \begin{equation}
        \label{eqvw}
        v(y) = v_0 + \epsilon \int_0^y F(s,v(s),w(s),\epsilon)ds, \ \ w(y) = w_0 + \epsilon \int_0^y G(s,v(s),w(s),\epsilon) ds
    \end{equation}
    where the integrals are calculated along $\mathcal{C}_{v_0}$.} With the substitution $v(y)=v_0+\epsilon\tilde{v}(y), w(y)=w_0+\epsilon\tilde{w}(y)$,
equations  \eqref{integralsystem} become
\begin{equation}
\label{tildeeq}
    \begin{split}
        \tilde{v}(y)&= \displaystyle \int_{0}^y F(s,v_0+\epsilon\tilde{v}(s),w_0+\epsilon\tilde{w}(s),\epsilon)\, ds, \\
        \tilde{w}(y)&= \displaystyle \int_{0}^y G(s,v_0+\epsilon\tilde{v}(s),w_0+\epsilon\tilde{w}(s),\epsilon)\, ds.
    \end{split}
\end{equation}
 Consider the Banach space $\mathcal{B}$ of pairs $\tilde{\mathbf{v}}(y)=\langle\tilde{v}(y),\tilde{w}(y)\rangle$ of continuous functions on $\overline{\mathcal{C}}_{v_0}$, with the norm $\|\tilde{\mathbf{v}}\|=\max\{\sup_{\overline{\mathcal{C}}_{v_0}} |\tilde{v}(y)|, \sup_{\overline{\mathcal{C}}_{v_0}} |\tilde{w}(y)|\}$.
\newline
Define $J:\mathcal{B} \rightarrow \mathcal{B}$ by
\begin{equation*}
    J(\tilde{\mathbf{v}}) = J(\langle\tilde{v},\tilde{w}\rangle) := \langle J_1(\tilde{v},\tilde{w}), J_2(\tilde{v},\tilde{w}) \rangle
\end{equation*}
where 
\begin{equation}\label{j1j2}
\begin{split}
    J_1(\tilde{v},\tilde{w}) &:= \int_{0}^y F(s,v_0+\epsilon\tilde{v}(s),w_0+\epsilon\tilde{w}(s),\epsilon)\, ds, \\
    J_2(\tilde{v},\tilde{w}) &:= \int_{0}^y G(s,v_0+\epsilon\tilde{v}(s),w_0+\epsilon\tilde{w}(s),\epsilon)\, ds.
\end{split}
\end{equation}

We show that $J$ is a contraction in a suitable ball $\|\tilde{\mathbf{v}}\|\le \tilde{M}$ in $\mathcal{B}$, implying  that $\tilde{\mathbf{v}}=J\tilde{\mathbf{v}}$ has a unique solution, by contraction mapping theorem.

Note that by \eqref{integralsystem} we have
\begin{equation}
\label{F120}
F(y,v_0,w_0,0)= {2} y^2-{2} (x^{[0]})^2:=F_1(y),\ \ \ \  G(y,v_0,w_0,0)=\frac{y^2-(x^{[0]})^2}{\sqrt{v_0-y^2}}\, \dot{x}^{[0]}\,-2x^{[0]}\,y:=F_2(y)
\end{equation}
where, from \eqref{formulax}, \eqref{formS}, \eqref{formuladx}
\begin{equation}
\label{S0}
x^{[0]}\,=\frac{w_0y+\sqrt{v_0-y^2}\sqrt{\mathcal{S}_0}}{v_0}\ \text{with\ }\ \mathcal{S}_0=2\,hv_0-{v}_0^{2}-{w}_0^{2},\ \text{ and }\ \ \ \dot{x}^{[0]}\,=\frac{w_0-x^{[0]}\,y}{\sqrt{v_0-y^2}}.
\end{equation}

{\em{I. The constant $\tilde{M}$.}} 

We first show that the functions \eqref{F120} are bounded for $y\in{\mathcal{C}_{v_0}}$ with a bound depending only on $c_0$ (and not on each $v_0,w_0$ separately). To see this, note that 
 
(a) If $(v_0,w_0)$ satisfy \eqref{initialconditionsassumptions} then necessarily 
\begin{equation}
\label{eqA}  
     0<h-c_0 \le v_0\le h+c_0, \text{ and }|w_0| \le h.
\end{equation}

(b) For $y\in{\mathcal{C}_{v_0}}$, we have
\begin{equation}
\label{eqB}
v_0\le |v_0-y^2|\le 3v_0
\end{equation}
and
\begin{equation}
\label{maxy}
 |y|  \le 2\sqrt{v_0} \le 2 \sqrt{h+c_0}
 \end{equation}
(these follow by noting that such a $y$ has the form $y=\sqrt{v_0}(\pm 1+e^{i\theta})$ for some $\theta$, and thus $ \left|v_0-y^2\right| = {v_0}\, \left|\pm2+e^{i\theta}\right|$).

 Thus, from \eqref{eqA}, \eqref{eqB} it follows that $0<h-c_0\le  v_0\le |v_0-y^2|\le 3(h+c_0)$ and $ \mathcal{S}_0  = h^2 - (h-v_0)^2 - w_0^2$. Thus
 \begin{equation}
 \label{boundsS0}
  0<h^2-c_0^2\le\mathcal{S}_0\le h^2.  
   \end{equation}
 
 So, 
 \begin{align*}
 &|x^{[0]}| \le \dfrac{h(2\sqrt{h+c_0}) + \sqrt{3}\sqrt{h+c_0}\  h}{h-c_0} :=c_2 \\
& |\dot{x}^{[0]}| \le \dfrac{h + 2 \sqrt{h+c_0}\ c_2  }{\sqrt{h-c_0}}:=c_3
 \end{align*}
 Thus, the bounds on $F_1$ and $F_2$ depend only on $c_0$:
 \begin{align*}
     &|F_1(y)| = |2y^2-2x^{[0]}\,^2| \le 2|y^2| + 2|x^{[0]}\,|^2 \le 8(h+c_0) + 2 c_2^2 \\
     &|F_2(y)| \le \dfrac{|y^2| + |x^{[0]}\,|^2}{|\sqrt{v_0-y^2}|} |\dot{x}^{[0]}|\, + 2|x^{[0]}|\ |y| \le \left(\dfrac{4(h+c_0) + c_2^2}{\sqrt{h-c_0}}\right) c_3 + 4c_2\sqrt{h+c_0} 
 \end{align*}

Therefore, for $y\in \mathcal{C}_{v_0}$ we have $\left|\displaystyle \int_{0}^y F_{1,2}(s)\, ds \right|\le 4\pi\sqrt{v_0}\max |F_{1,2}|\le 4\pi \sqrt{h+c_0} \max|F_{1,2}| =:K_0$. Clearly, $K_0$ is a constant that depends only on $c_0$. Let $\tilde{M}=2K_0$.

{\em{II:}}  We now show that the function $J$ leaves the ball $\|\tilde{\mathbf{v}}\| \le \tilde{M}$ invariant. Let $\tilde{\mathbf{v}}$ be such that $\|\tilde{\mathbf{v}}\|  =\| \langle \tilde{v}(y), \tilde{w}(y)  \rangle \|  \le \tilde{M}$. 

By substituting $v(y) = v_0 + \epsilon \Tilde{v}$, $w(y) = w_0 + \epsilon \Tilde{w}$, write the quantity \eqref{formS} as $\mathcal{S}=\mathcal{S}_0+\epsilon \mathcal{S}_1$ where $\mathcal{S}_0 = 2\,hv_0-{v}_0^{2}-{w}_0^{2}$ and $\mathcal{S}_1$ is a polynomial in $\epsilon,v_0, \tilde{v}, w_0, \tilde{w},y$. We know that $\mathcal{S}_0$ satisfies \eqref{boundsS0}.

Let $0 < \epsilon \le \epsilon_0 $ be small enough (with $\epsilon_0\le 1$)\footnote{The maximal value $1$ was chosen conventionally, for the estimate \eqref{epsilonthree} below.} so that it satisfies the following: 
\begin{enumerate}
    \item $\epsilon \tilde{M}\le \alpha(h-c_0)\ \ \text{\ for some }\alpha\in(0,1)$, \label{epsilonone}
    \item $(h-c_0)-\epsilon \left[\tilde{M} + 4 \sqrt{h+c_0}(3(h+c_0)+\epsilon \tilde{M})  \right]  > \frac12(h-c_0)>0$, and \label{epsilontwo}
    \item $( h^2-c_0^2)-\epsilon \max_{0\le\epsilon\le1}\max_{\{v_0,w_0\,|\,(h-v_0)^2+w_0^2\le c_0^2\} }\max_{ \|\tilde{\mathbf{v}}\|\le \tilde{M},y\in\mathcal{C}_{v_0}}  \,|\mathcal{S}_1|\ >\frac12( h^2-c_0^2)>0$ \label{epsilonthree}
\end{enumerate}

We now establish that the denominators which appear in the integrands in \eqref{tildeeq} do not vanish for $y$ on the path of integration and, moreover, are  uniformly bounded below.

First, we have, using  \eqref{eqB} and \eqref{epsilonone} from above:   $|v-y^2|=|v_0+\epsilon\tilde{v}(y)-y^2|\ge |v_0-y^2|-\epsilon|\tilde{v}(y)|\ge {v_0}-\epsilon \tilde{M}\ge (1-\alpha)(h-c_0)>0$. 

Then, the denominator in \eqref{formulax} does not vanish: using \eqref{eqA}, \eqref{eqB}, \eqref{maxy} and \eqref{epsilontwo} we have $|v+2\epsilon y(v-y^2)|\ge v_0-\epsilon \tilde{M} -4\epsilon \sqrt{v_0} |v-y^2|\ge v_0-\epsilon \tilde{M} -4\epsilon \,  \sqrt{h+c_0}(3v_0+\epsilon \tilde{M})    \ge h-c_0-\epsilon \tilde{M} -4\epsilon \sqrt{h+c_0}(3(h+c_0)+\epsilon \tilde{M})\ge \frac12(h-c_0) >0$.

Finally, the quantity $\mathcal{S}$ in \eqref{formS} does not vanish since $|\mathcal{S}|\ge |\mathcal{S}_0|-\epsilon |\mathcal{S}_1|\ge h^2-c_0^2-\epsilon \max|\mathcal{S}_1|>0$ where for the last inequality we used \eqref{epsilonthree} from above.

We claim that the functions $F, G,\partial_vF, \partial_vG,\partial_wF, \partial_wG,  \partial_\epsilon F, \partial_\epsilon G$ evaluated at $(y,v_0+\epsilon\tilde{v}, w_0+\epsilon\tilde{w},\epsilon)$ are well defined, and continuous if $\|{\tilde{\mathbf{v}}}\|\le \tilde{M}$. 
Indeed, these functions are rational functions of $v,w,y,\epsilon, \sqrt{\mathcal{S}}$, $\sqrt{v-y^2},y$; their denominators are products of $ v+2\epsilon y\left( v-{y}^{2}\right) ,\sqrt{\mathcal{S}}, \sqrt{v-y^2}$ which, by the estimates above,  for all $\epsilon\le \epsilon_0$, are bounded below by a positive constant dependent only on $c_0$. 

Then the absolute values of $\partial_vF, \partial_vG,\partial_wF, \partial_wG,  \partial_\epsilon F, \partial_\epsilon G$ evaluated at points $(y, v_0+\epsilon\tilde{\tilde{v}},w_0+\epsilon\tilde{\tilde{w}},\epsilon)$ with $|\tilde{\tilde{v}}|\le \tilde{M}$ and $|\tilde{\tilde{w}}|\le \tilde{M}$ are bounded by some constant $C$ which depends only on $c_0$.

From \eqref{j1j2} we have, with the bound $C$ above,
$$|J_{1,2}(\tilde{\mathbf{v}})|\le K_0+\epsilon \ell\, C\,(2\tilde{M}+1)\le K_0+K_0$$
where $ \ell $ is the length of the loop $\mathcal{C}_{v_0}$ (so $\ell=4\pi\sqrt{v_0}\le 4\pi\sqrt{h+c_0}:=\ell_0$) and the last inequality holds for $\epsilon$ small enough: $\epsilon\le\epsilon_0':=\min\{\epsilon_0,K_0/[\ell_0 C (4K_0+1)]\}$.
Therefore $|J_{1,2}(\tilde{\mathbf{v}})|\le \tilde{M}$ and $J$ leaves the ball invariant.

{\em III. } $J$ is a contraction. Indeed, with $C$ same as above, we have $|J_{1,2}(\tilde{\mathbf{v}})-J_{1,2}(\tilde{\mathbf{v}}')|\le \epsilon 2\ell C\|\tilde{\mathbf{v}}-\tilde{\mathbf{v}}' \|$. Therefore, $J$ is a contraction if $ \epsilon<1/(2\ell _0C):=\epsilon_0''$.

{\em IV. } In conclusion, by the contraction mapping theorem, the system \eqref{tildeeq} has a unique solution for $\epsilon<\min\{\epsilon_0',\epsilon_0''\}$.

(ii) Continuity in parameters and initial conditions follow from general theorems. 
\end{proof}

\begin{corollary}\label{timeInt}
    After integration along a loop $\mathcal{C}_{v_0}$, the time which is initially $0$ becomes, by Lemma \ref{existence}:
    \begin{equation*}
        t_1 = \oint \dfrac{dy}{\sqrt{v(y)-y^2}} = \oint \dfrac{dy}{\sqrt{v_0 + \epsilon \Tilde{v}(y)-y^2}} = \oint \dfrac{dy}{\sqrt{v_0-y^2}} + O(\epsilon) = 2\pi + O(\epsilon)
    \end{equation*}
    therefore $t_1$ is a ``quasi-period" of the motion. In fact we have $t_1 = 2\pi + O(\epsilon^2)$. 
\end{corollary}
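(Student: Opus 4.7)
The chain of equalities written in the statement reduces the claim $t_1=2\pi+O(\epsilon)$ to two simple ingredients: (a) the expansion
\[
\frac{1}{\sqrt{v(y)-y^2}}=\frac{1}{\sqrt{v_0-y^2}}\Bigl(1+\frac{\epsilon\tilde v(y)}{v_0-y^2}\Bigr)^{-1/2}=\frac{1}{\sqrt{v_0-y^2}}-\frac{\epsilon\,\tilde v(y)}{2(v_0-y^2)^{3/2}}+O(\epsilon^2),
\]
which is valid uniformly on $\mathcal{C}_{v_0}$ because Lemma \ref{existence} provides $|\tilde v(y)|\le\tilde M$ and, by the intermediate estimates in its proof, $|v(y)-y^2|\ge\tfrac12(h-c_0)>0$ throughout $\mathcal{C}_{v_0}$; and (b) the evaluation of the unperturbed integral. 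For (b) I would deform $\mathcal{C}_{v_0}$ on the Riemann surface of $\sqrt{v_0-y^2}$ to the branch cut $[-\sqrt{v_0},\sqrt{v_0}]$ and parametrize by $y=\sqrt{v_0}\sin\theta$, $\theta\in[0,2\pi]$; the integrand collapses to $d\theta$ and the integral equals $2\pi$. This proves the $O(\epsilon)$ statement and is the entirety of what the displayed chain of equalities asserts.

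For the sharper claim $t_1=2\pi+O(\epsilon^2)$ the plan is to show that the first-order coefficient
\[
-\frac{1}{2}\oint_{\mathcal{C}_{v_0}}\frac{v^{[1]}(y)}{(v_0-y^2)^{3/2}}\,dy
\]
vanishes, where I have replaced $\tilde v$ by its leading term $v^{[1]}(y)=\int_0^y F_1(s)\,ds$, with $F_1(y)=2y^2-2(x^{[0]}(y))^2$ from \eqref{F120}; the remainder $\tilde v-v^{[1]}=O(\epsilon)$ contributes only at $O(\epsilon^2)$ to $t_1$. The key identity is the exact antiderivative
\[
\frac{1}{(v_0-y^2)^{3/2}}=\frac{1}{v_0}\,\frac{d}{dy}\!\left(\frac{y}{\sqrt{v_0-y^2}}\right),
\]
which permits integration by parts along $\overline{\mathcal{C}}_{v_0}$. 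The boundary contribution vanishes because both endpoints lie at $y=0$ and the factor $y$ kills them (moreover $ACv^{[1]}(0)=0$), and one is left with
\[
\oint_{\mathcal{C}_{v_0}}\frac{v^{[1]}(y)}{(v_0-y^2)^{3/2}}\,dy=-\frac{1}{v_0}\oint_{\mathcal{C}_{v_0}}\frac{y\,F_1(y)}{\sqrt{v_0-y^2}}\,dy.
\]

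The final step is to evaluate the right-hand integral. Substituting the closed form of $x^{[0]}$ from \eqref{S0} and expanding,
\[
F_1(y)=2\Bigl(1-\tfrac{w_0^2}{v_0^2}\Bigr)y^2-\frac{4w_0\sqrt{\mathcal S_0}}{v_0^2}\,y\sqrt{v_0-y^2}-\frac{2\mathcal S_0}{v_0^2}(v_0-y^2),
\]
so $y F_1(y)/\sqrt{v_0-y^2}$ is a linear combination of $y^3/\sqrt{v_0-y^2}$, $y^2$, and $y\sqrt{v_0-y^2}$. Under $y=\sqrt{v_0}\sin\theta$ with $\theta\in[0,2\pi]$ (the same deformation used above), each of these terms becomes, up to constants, $\sin^{2k+1}\theta\,\cos^m\theta\,d\theta$ with $2k+1$ odd; every such integrand integrates to zero over $[0,2\pi]$ by parity. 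Therefore the first-order correction vanishes, giving $t_1=2\pi+O(\epsilon^2)$.

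The only non-routine step is the vanishing of the three trigonometric integrals, and this is resolved by a one-line parity observation once the integration by parts has reduced the integrand from $(v_0-y^2)^{-3/2}$ (which would produce $\cos^{-2}\theta$ factors) to $(v_0-y^2)^{-1/2}$ (which becomes a polynomial in $\sin\theta,\cos\theta$). Thus the principal conceptual content of the corollary is precisely the integration-by-parts identity above; everything else is bookkeeping already supplied by Lemma \ref{existence}.
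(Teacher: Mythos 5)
Your proof is correct, and for the sharper claim $t_1 = 2\pi + O(\epsilon^2)$ it supplies an argument the paper does not actually give: the paper simply states the $O(\epsilon^2)$ refinement as a fact (presumably because it already follows from the explicit perturbation series for $T$, equation \eqref{time one loop}, computed in Appendix \ref{epsilon square term} from the direct $\epsilon$-expansion of $x,y$ — whose $O(\epsilon)$ coefficient vanishes). Your route is different and arguably more in the spirit of the paper's slow-variable framework: you expand $(v(y)-y^2)^{-1/2}$ in $\epsilon$ with remainder (justified uniformly on $\mathcal{C}_{v_0}$ by the lower bounds from the proof of Lemma \ref{existence}), isolate the $O(\epsilon)$ coefficient $-\tfrac12\oint v^{[1]}/(v_0-y^2)^{3/2}\,dy$, and kill it with the exact antiderivative identity $(v_0-y^2)^{-3/2} = v_0^{-1}\,\tfrac{d}{dy}\bigl(y/\sqrt{v_0-y^2}\bigr)$ followed by integration by parts. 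The boundary term vanishes because $v^{[1]}(0)=0$ and $ACv^{[1]}(0)=0$ (both verifiable from \eqref{v1}), and because $\sqrt{v_0-y^2}$ returns to its initial branch after traversing both loops of $\mathcal{C}_{v_0}$. The reduced integral $\oint yF_1(y)/\sqrt{v_0-y^2}\,dy$ does indeed vanish. One small imprecision: after the substitution $y=\sqrt{v_0}\sin\theta$ the three integrands become $\sin^3\theta\,d\theta$, $\sin^2\theta\cos\theta\,d\theta$, and $\sin\theta\cos^2\theta\,d\theta$; the middle one has \emph{even} sine power, so the blanket claim ``$\sin^{2k+1}\theta\cos^m\theta$'' is not accurate — the correct statement is that in each case one of the two exponents is odd, which still forces the integral over $[0,2\pi]$ to vanish. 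Equivalently, and avoiding the trigonometric parametrization altogether, each of the three pieces has an explicit antiderivative that is a polynomial in $y$ and $\sqrt{v_0-y^2}$, hence single-valued along $\mathcal{C}_{v_0}$, which gives the vanishing immediately.
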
 

\

Lemma\,\ref{L2} estimates the remainders $R,S$ in \eqref{vwhigherExpansion}; recall that they satisfy the integral system \eqref{eqRS}.

\begin{lemma}\label{L2}

Under the assumptions of Lemma\,\ref{existence} 
there exist positive constants $\epsilon_0$ and $M$ depending only on $c_0$, such that the system \eqref{eqRS} has a unique solution $R,S$ and this solution satisfies $$|R(y)|\le M \ \text{and}\ |S(y)|\le M\ \text{for all }  y\in \overline{\mathcal{C}}_{v_0},\  \epsilon\le \epsilon_0. $$

Moreover, 
 $R,S$ depend continuously on $v_0,w_0,\epsilon$.
\end{lemma}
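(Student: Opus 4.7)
The plan is to imitate the contraction-mapping argument of Lemma~\ref{existence}, applied now to the nonlinear integral operator $\mathcal{I}=\langle\mathcal{I}_1,\mathcal{I}_2\rangle$ defined by the right-hand sides of \eqref{eqRS}, acting on the Banach space $\mathcal{B}$ of pairs $\langle R,S\rangle$ of continuous functions on $\overline{\mathcal{C}}_{v_0}$ with norm $\|\langle R,S\rangle\|=\max\{\sup|R|,\sup|S|\}$. The first step is to show that $f_1$ and $g_1$ are bounded on $\mathcal{C}_{v_0}$ by constants depending only on $c_0$. From \eqref{formf12}, $f_1$ is a polynomial in $v^{[1]},w^{[1]}$ whose coefficients are second derivatives of $F$ evaluated at $(y,v_0,w_0,0)$; the latter are rational functions whose denominators are powers of $v_0$, $\sqrt{\mathcal{S}_0}$ and $\sqrt{v_0-y^2}$, each of which is bounded above and away from zero on $\mathcal{C}_{v_0}$ by the estimates \eqref{eqA}--\eqref{boundsS0} already established in Lemma~\ref{existence}. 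The explicit formulas for $v^{[1]},w^{[1]},v^{[2]},w^{[2]}$ in Appendix~\ref{vwExpansions} show that they too are bounded on $\mathcal{C}_{v_0}$ by constants depending only on $c_0$. Let $K:=\ell_0\max\{\sup|f_1|,\sup|g_1|\}$ with $\ell_0=4\pi\sqrt{h+c_0}$ the length bound for the loop.

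Next I would control $f_2,g_2$ on a ball $\|\langle R,S\rangle\|\le M$ with $M$ to be chosen. The linear pieces $\overset{\circ}{F_v}(v^{[2]}+\epsilon R)+\overset{\circ}{F_w}(w^{[2]}+\epsilon S)$ and the analogous terms in $g_2$ are bounded by $C_1(1+\epsilon M)$. The quadratic remainder $\mathcal{Q}_F(y,\epsilon,\epsilon^2 R,\epsilon^2 S)$ has coefficients given by integrals in $\lambda\in[0,1]$ of second derivatives of $F$ at the shifted points $(y,v_0+\lambda\epsilon\xi_v,w_0+\lambda\epsilon\xi_w,\lambda\epsilon)$ with $\xi_v=v^{[1]}+\epsilon v^{[2]}+\epsilon^2 R$, $\xi_w=w^{[1]}+\epsilon w^{[2]}+\epsilon^2 S$; shrinking $\epsilon_0$ keeps these shifted points in the domain where the denominator non-vanishing estimates of Lemma~\ref{existence} apply (note $\epsilon\xi_v=O(\epsilon)$ regardless of the size of $R$, provided $\epsilon^2 M$ stays small), so $|\mathcal{Q}_F|,|\mathcal{Q}_G|\le C_2$ with $C_2$ depending only on $c_0$. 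Hence $|\epsilon f_2|,|\epsilon g_2|\le \epsilon(C_1+\epsilon C_1 M+C_2)$ on the ball.

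Now I fix $M:=2K$ and impose $\epsilon_0$ small enough that $\epsilon\ell_0(C_1+\epsilon C_1 M+C_2)\le K$ for all $\epsilon\le\epsilon_0$. Then along the open path $\overline{\mathcal{C}}_{v_0}$ one has $|\mathcal{I}_j(R,S)(y)|\le K+K=M$, so the ball is invariant. For the contraction estimate, observe that the dependence of $f_2,g_2$ on $(R,S)$ enters only through the combinations $\epsilon R,\epsilon S$, so the chain rule yields
\begin{equation*}
|\mathcal{I}_j(R,S)-\mathcal{I}_j(R',S')|\le \epsilon^2\,\ell_0\,C_3\,\|\langle R-R',S-S'\rangle\|,
\end{equation*}
with $C_3$ bounding $|\partial_3 f_2|,|\partial_4 f_2|,|\partial_3 g_2|,|\partial_4 g_2|$ on the ball. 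A further reduction $\epsilon_0^2\ell_0 C_3<1/2$ makes $\mathcal{I}$ a contraction, and the Banach fixed-point theorem delivers the unique $\langle R,S\rangle$ with $|R|,|S|\le M$.

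The main obstacle is bookkeeping: ensuring that every constant at every stage remains controlled purely by $c_0$, and in particular that the second-derivative integrands in $\mathcal{Q}_F,\mathcal{Q}_G$ keep their denominators uniformly bounded away from zero when evaluated along the segment from $(v_0,w_0,0)$ to $(v_0+\epsilon\xi_v,w_0+\epsilon\xi_w,\epsilon)$. This reduces to the same non-vanishing estimates used in the proof of Lemma~\ref{existence}, applied with perturbation size $\epsilon\tilde M+\epsilon^3 M$ in place of $\epsilon\tilde M$; both remain $O(\epsilon)$, so a further reduction of $\epsilon_0$ handles it uniformly. Finally, continuous dependence of the fixed point on $(v_0,w_0,\epsilon)$ follows from continuity of the integrand in these parameters combined with the uniformity of the contraction constant, via the standard parameter-dependent contraction mapping principle.
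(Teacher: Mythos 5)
Your proposal is correct and follows essentially the same route as the paper: a contraction-mapping argument for $\langle\mathcal I_1,\mathcal I_2\rangle$ in the same Banach space $\mathcal B$, reusing the denominator non-vanishing estimates from Lemma~\ref{existence} and noting that the $(R,S)$-dependence enters only at order $\epsilon^2$. The paper's own proof is much terser (it simply invokes the arguments of Lemma~\ref{existence} and flags that the $\arcsin(y/\sqrt{v_0})$ terms in $v^{[2]},w^{[2]}$ are regular on $\overline{\mathcal C}_{v_0}$); your write-up fills in the ball-invariance and contraction bookkeeping explicitly, which the paper leaves implicit.
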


\begin{proof}
 The proof is similar to the proof of Lemma\,\ref{existence}. Fix $v_0,w_0$ satisfying \eqref{initialconditionsassumptions} and consider the same Banach space $\mathcal{B}$ as in the proof of Lemma\,\ref{existence}.
We show that the operator $(\mathcal{I}_1,\mathcal{I}_2)$ defined in \eqref{eqRS}
is contractive in $\mathcal{B}$ if $\epsilon$ is small enough.

We see that the structure of the integral operators $\mathcal{I}_{1,2}$  in \eqref{eqRS} is similar to that of the integral operators $J_{1,2}$ in the proof of Lemma\,\ref{existence}. One difference is the appearance of the function $\arcsin \tfrac y{\sqrt{v_0}}$ (through the functions $v^{[2]}, w^{[2]}$) but this is regular on $\overline{\mathcal{C}}_{v_0}$. 

The same arguments as in the proof of Lemma\,\ref{existence} go through straightforwardly.

\end{proof}
\subsection{Solution along $n$ loops: iteration}
Under the assumptions of Lemma\,\ref{existence}, after the first loop we replace the initial conditions   $v_0,w_0$ by $v_1,w_1$. 
We can assume that $v_1,w_1$  satisfy \eqref{initialconditionsassumptions} decreasing $\epsilon_0$ and increasing  $c_0$ if needed; this is possible by Lemma \,\ref{existence}. 
Lemma\,\ref{existence} can be applies again, for initial conditions $v_1,w_1$. This procedure can be continued inductively for a finite number $N$ of steps. The question is how large can $N$ be: we find that the condition is $N\epsilon^3=O(1)$.

Let $v_n,w_n$ be the values of $v_0,w_0$ after $n$ loops, see also \S\ref{preparation}; $v_n,w_n$ are real numbers, see Remark\,\ref{remarkreal}. After $n$ loops they satisfy \eqref{recurrencevwR} (where $R_n,\,S_n$ depend on $v_0,w_0,\epsilon$).
 
{\bf Rescaling.} For simplicity, we rescale the variables as follows. Let \(\beta = \dfrac{14 \pi}{3}\), and define 
\begin{equation} \label{rescaling}
    \sqrt{\beta}\epsilon = \epsilon_1, \quad \beta^{-3/2}R_n = R_{n,1}, \quad \beta^{-3/2}S_n = S_{n,1}.
\end{equation}

For the remainder of this section, we omit the subscript \(1\) in the notation above to avoid overburdening the expressions. Recall that \(u = h - v\), and denote \(u_n = h - v_n\).

After rescaling, the recurrence \eqref{recurrencevwR} becomes
\begin{equation}
\label{recurrenceuw}
\begin{split}
    u_{n+1} &= u_n - \epsilon^2 w_n \sqrt{h^2 - u_n^2 - w_n^2} - \epsilon^3 R_n, \\
    w_{n+1} &= w_n + \epsilon^2 u_n \sqrt{h^2 - u_n^2 - w_n^2} + \epsilon^3 S_n. 
\end{split}
\end{equation}
where $R_n,\,S_n$ depend on $u_0,w_0,\epsilon$.

\begin{lemma}
    \label{Tn}
    Fix \(h > 0\) and assume that for some positive constant \(c_0\), we have
    \begin{equation}
    \label{condu0w0c0}
    u_0^2 + w_0^2 < c_0^2 < h^2.
    \end{equation}
    
    Consider the solution of the recurrence \eqref{recurrenceuw} with initial conditions \(u_0, w_0\). Assume that for some positive constants \(\epsilon_0, M\), and for some natural number \(N\), the remainders in \eqref{recurrenceuw} satisfy
    \begin{equation}
    \label{assumeRnSn}
    |R_n| \leq M, \quad |S_n| \leq M, \quad \text{for all } n = 0, 1, \ldots, N-1, \quad \text{and all } \epsilon \in [0, \epsilon_0].
    \end{equation}

    \begin{enumerate}[label=(\roman*)]
        \item If \(u_0 \neq 0\) or \(w_0 \neq 0\), let \(c_1 > 0\) be such that
        \[
        0 < c_1 < u_0^2 + w_0^2.
        \]
        Then, there exist positive constants \(M_\delta, M_\eta,\) and \(K\), depending only on \(M, c_0\), and \(c_1\), such that if
        \begin{equation}
        \label{assumeN}
        N \epsilon_0^3 \leq K,
        \end{equation} 
        then for all \(n = 0, 1, \ldots, N\) and \(\epsilon \in [0, \epsilon_0]\), we have
        \begin{equation}
        \label{solrecTp}
        T_n := u_n^2 + w_n^2 \quad \text{satisfies} \quad T_n = T_0 + n\epsilon^3\delta_n, \quad \text{with } |\delta_n| \leq M_\delta,
        \end{equation}
        and
        \begin{equation}
        \label{unwn}
        u_n + i w_n = \sqrt{T_0}\, e^{i\phi_0 + i\epsilon^2 \sum_{k=0}^{n-1} \sqrt{h^2 - T_k}}\, \left(1 + n\epsilon^3 \eta_n\right),
        \end{equation}
        where \(\phi_0\) is given by
        \begin{equation}
        \label{defphi0}
        \frac{u_0 + i w_0}{\sqrt{T_0}} = e^{i\phi_0},
        \end{equation}
        and
        \begin{equation}
        \label{estimetan}
        |\eta_n| \leq M_\eta.
        \end{equation}

        \item For \(n\) small enough so that \(n \epsilon^{5/2} \ll 1\), and for \(\epsilon\) small enough, Formula \eqref{unwn} simplifies to
        \begin{equation}
        \label{unwnsmaller}
        u_n + i w_n = \sqrt{T_0}\, \exp\left[i\phi_0 + i n\epsilon^2 \sqrt{h^2 - T_0} + i n^2 \epsilon^5 \eta_n'\right]\, \left(1 + n\epsilon^3 \eta_n\right),
        \end{equation}
        where \(\eta_n', \eta_n\) are bounded by constants depending only on \(M, c_0, c_1\). Separating the real and imaginary parts in \eqref{unwnsmaller}, we obtain
        \begin{equation}
        \label{sincos}
        \begin{aligned}
        u_n &= \sqrt{T_0}\, \cos\left(\phi_0 + n\epsilon^2\sqrt{h^2 - T_0} + O(n^2 \epsilon^5)\right) + n\epsilon^3 \delta'_{1,n}, \\
        w_n &= \sqrt{T_0}\, \sin\left(\phi_0 + n\epsilon^2\sqrt{h^2 - T_0} + O(n^2 \epsilon^5)\right) + n\epsilon^3 \delta'_{2,n},
        \end{aligned}
        \end{equation}
        where \(\delta_{j,n}, \delta'_{j,n}\) are bounded by constants depending only on \(M, c_0, c_1\).

        \item If \(u_0 = w_0 = 0\), then \(u_n, w_n\) are \(O(n\epsilon^3)\), in the sense that there exist positive constants \(M_\delta, K\), depending only on \(M, c_0\), such that if \(N \epsilon_0^3 \leq K\), then for \(n = 0, 1, \ldots, N\),
        \[
        T_n = n\epsilon^3 \delta_n, \quad \text{with } |\delta_n| \leq M_\delta,
        \]
        and \(u_n, w_n\) are \(O(n\epsilon^3)\).
    \end{enumerate}
\end{lemma}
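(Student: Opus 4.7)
The plan is to switch to the complex variable $z_n := u_n + iw_n$, which collapses the coupled real recurrence \eqref{recurrenceuw} into the one-dimensional affine recurrence
$$z_{n+1} = a_n z_n + \epsilon^3\zeta_n, \qquad a_n := 1+i\epsilon^2\sqrt{h^2-|z_n|^2},\ \ \zeta_n := -R_n + iS_n,$$
with $|\zeta_n|\le\sqrt{2}M$. In these variables the geometry is transparent: each step rotates $z_n$ by the angle $\arg a_n = \epsilon^2\sqrt{h^2-T_n}+O(\epsilon^6)$, while $|a_n| = \sqrt{1+\epsilon^4(h^2-T_n)} = 1+O(\epsilon^4)$, so $T_n := |z_n|^2$ is nearly conserved. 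Expanding $|z_{n+1}|^2$ gives
$$T_{n+1} = T_n\bigl(1+\epsilon^4(h^2-T_n)\bigr) + 2\epsilon^3\operatorname{Re}\!\bigl(\bar a_n \bar z_n\zeta_n\bigr) + \epsilon^6|\zeta_n|^2,$$
so each increment is $O(\epsilon^3)$.

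First I would prove \eqref{solrecTp} by induction. Assuming inductively that $T_k$ lies in a ``safe band'' (for case (i), an interval of the form $[\tfrac12(c_1+T_0),\tfrac12(T_0+c_0^2)]$; for case (iii) just $[0,\tfrac12(c_0^2+h^2)]$), the factors $\sqrt{h^2-T_k}$ and $\bar z_n$ are uniformly bounded, yielding $|T_{n+1}-T_n|\le C\epsilon^3$ with $C = C(M,c_0,c_1)$. Hence $|T_n-T_0|\le Cn\epsilon^3$. Choosing $K$ small depending on $c_0,c_1,M$ and imposing $N\epsilon_0^3\le K$ ensures the band is never left for $n\le N$, closing the induction and giving \eqref{solrecTp} with $M_\delta = C$.

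For the phase I would use the explicit solution of the affine recurrence. With $\psi_n := \prod_{k=0}^{n-1}a_k$,
$$z_n = \psi_n\Bigl(z_0 + \sum_{j=0}^{n-1}\frac{\epsilon^3\zeta_j}{\psi_{j+1}}\Bigr).$$
Since $|a_k|=1+O(\epsilon^4)$, we have $|\psi_j|\in[\tfrac12,2]$ (say) for all $j\le N$ once $N\epsilon^3\le K$, so the sum is $O(n\epsilon^3)$. Taking logarithms of each factor yields $\log a_k = i\epsilon^2\sqrt{h^2-T_k}+O(\epsilon^4)$, whence
$$\psi_n = \exp\!\Bigl(i\epsilon^2\sum_{k=0}^{n-1}\sqrt{h^2-T_k}\Bigr)\bigl(1+O(n\epsilon^4)\bigr).$$
Pulling $z_0 = \sqrt{T_0}\,e^{i\phi_0}$ out (possible since $T_0>c_1^2>0$ in case (i)) and absorbing the $O(n\epsilon^3)$ correction from the sum and the $O(n\epsilon^4)$ factor from $\psi_n$ into a single multiplicative factor $(1+n\epsilon^3\eta_n)$ with $|\eta_n|\le M_\eta$ gives \eqref{unwn}. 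The simplification \eqref{unwnsmaller} in part (ii) is then immediate: from $|T_k-T_0|\le M_\delta k\epsilon^3$ we get $\sqrt{h^2-T_k} = \sqrt{h^2-T_0} + O(k\epsilon^3)$, and summing produces $\epsilon^2\sum_{k=0}^{n-1}\sqrt{h^2-T_k} = n\epsilon^2\sqrt{h^2-T_0} + O(n^2\epsilon^5)$, which is bounded whenever $n\epsilon^{5/2}\ll 1$. Separating real and imaginary parts yields \eqref{sincos}.

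Case (iii) is strictly simpler: the same modulus induction of Step 1 gives $T_n = O(n\epsilon^3)$, and the phase need not be tracked since $z_0=0$. The \emph{main obstacle} is really the self-sustaining induction in Step 1 for case (i): one must calibrate $K$ to the gaps $h^2-c_0^2$ and $T_0-c_1^2$ so that the bound $|T_n-T_0|\le M_\delta n\epsilon^3$ (with $M_\delta$ itself depending on the width of the safe band through $C(M,c_0,c_1)$) is actually compatible with staying in that band, avoiding a circular dependence; this is handled by first fixing the band, then choosing $C$ for that band, and finally choosing $K$ so that $M_\delta K$ is smaller than half the band-width. Once this is in place, the rest is algebra with the closed-form solution of the affine recurrence plus Taylor expansion of $\log(1+i\epsilon^2\sqrt{h^2-T_k})$ and $\sqrt{h^2-T_k}$.
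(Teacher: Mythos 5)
Your proposal is correct and follows essentially the same path as the paper: derive the modulus recurrence for $T_n$, close the induction inside a safe band by calibrating $K$ and $M_\delta$, solve the affine recurrence in closed form, and expand $\log a_k$ to extract the phase. The only (cosmetic) difference is that you work directly with $z_n = u_n + iw_n$ while the paper normalizes to $A_n = z_n/\sqrt{T_n}$; the resulting formulas and estimates coincide.
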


\begin{proof}

\noindent (i) Let $M$ and $\epsilon_0$ be as in \eqref{assumeRnSn}. We choose $M_\delta$ large enough so that the following holds 
\begin{equation}
\label{estimM}
 \epsilon h^4+4M h(1+\epsilon^2h)+\epsilon^3 2M^2\le\frac12 M_\delta.
 \end{equation}
 for all $\epsilon\le\epsilon_0$. We then choose $K$ small enough so that 
\begin{equation}
\label{estimK}
 \epsilon_0 h^2 K\le 1 
\end{equation}
and also
\begin{equation}
\label{condM}
KM_\delta<\frac12c_1,\ \ \ \text{ and }\ \  \  KM_\delta<\frac{h^2-c_0^2}2:=c_2.
\end{equation}
{\em 1. Estimate of $T_n$.}
We deduce a recurrence relation for $T_n$ from which we prove, by induction, the estimate \eqref{solrecTp}. 

\noindent Squaring both sides in equations \eqref{recurrenceuw} and adding them we obtain
\begin{multline}
\label{recurTn}
T_{n+1}=T_n+\epsilon^4T_n\left( h^2-T_n\right)+\epsilon^3\tilde{R}_n\\
\text{where }
\tilde{R}_n=-2R_n\left(u_n-\epsilon^2w_n\sqrt{h^2-T_n}\right)+2S_n\left(w_n+\epsilon^2u_n\sqrt{h^2-T_n}\right)+\epsilon^3{R}_n^2+\epsilon^3{S}_n^2
\end{multline}
We prove the estimate \eqref{solrecTp} for all $n\le N$ where $N$ satisfies \eqref{assumeN}  by induction on $n$.

Of course, $\delta_0=0$, so \eqref{solrecTp} holds for $n=0$. Next, we assume that for some $n$ with $1\le n\le N$  we have $T_k=T_0+k\epsilon^3\delta_k$ with $|\delta_k|\le M_\delta$ for all $k=0,1,\ldots n-1$ and prove this estimate for $T_n$. 
In view of \eqref{condM}, \eqref{condu0w0}, and \eqref{assumeN} we have
\begin{equation}
\label{boundsTk}
T_k\ge \frac12 c_1 {>0}\ \text{ and }\ h^2-T_k\ge c_2 {>0}\ \ \ \text{for }k=0,1,\ldots,n-1
\end{equation}
We  sum for $n$ from $0$ to $n-1$ in \eqref{recurTn} and obtain 
$$
T_n=T_0+\epsilon^4\sum_{k=0}^{n-1}T_k\left( h^2-T_k\right)+\epsilon^3\sum_{k=0}^{n-1}\tilde{R}_k.
$$
Therefore, using the fact that $T_k>0$ and $h^2-T_k>0$, it follows that
\begin{equation}
\label{TnmT0}
\left|T_n-T_0\right|\le \epsilon^4h^2\sum_{k=0}^{n-1}T_k +\epsilon^3\sum_{k=0}^{n-1}\left| \tilde{R}_k\right|.
\end{equation}
Using the fact that $|u_k|,\,|w_k|\le \sqrt{T_k}<h$ we have
\begin{equation}
\label{sumtRk}
\left| \tilde{R}_k\right|\le 4Mh (1+\epsilon^2h)+2\epsilon^3M^2 .
\end{equation}
Hence
$$\sum_{k=0}^{n-1}\left| \tilde{R}_k\right| \le 4nMh(1+\epsilon^2h) +\epsilon^3 2nM^2. $$
Also, by the induction hypothesis, 
$\sum_{k=0}^{n-1}T_k\le n\left(T_0+K\frac12 M_\delta\right)$. Using this and \eqref{sumtRk} in \eqref{TnmT0} we obtain
\begin{equation}
\label{prelimestimtnmt0}
\left|T_n-T_0\right|\le n \epsilon^3 \left[ \epsilon h^2 \left(T_0+K\frac12 M_\delta\right)+4Mh(1+\epsilon^2h)+\epsilon^3 2M^2 \right]\le n \epsilon^3\,M_\delta
\end{equation}
where the last inequality holds for $\epsilon\le\epsilon_0$ since we chose $K$ so that \eqref{estimK} holds,
and since, by  \eqref{estimM}, we have
\begin{equation}
\label{estimMraw}
 \epsilon h^2 T_0+4Mh(1+\epsilon^2h) +\epsilon^3 2M^2\le\frac12 M_\delta
 \end{equation}

Therefore, by \eqref{prelimestimtnmt0}, the estimate \eqref{solrecTp} holds for $T_n$, with the same bound $M_\delta$ for $\delta_n$ as for $\delta_1,\ldots,\delta_{n-1}$. The induction step is proved. 

\noindent{\em 2. Estimate of $u_n+iw_n$.} Denote $u_n+iw_n=\sqrt{T_n}A_n$ (of course, $A_n=e^{i\phi_n}$ for some real $\phi_n$). From \eqref{recurrenceuw} we obtain
$$\sqrt{T_{n+1}}A_{n+1}=\sqrt{T_n}A_n+i\epsilon^2A_n\sqrt{T_n}\sqrt{h^2-T_n}+ \epsilon^3(-R_n+iS_n)$$
or, dividing by $\sqrt{T_{n+1}}$ (since $T_k>0$ for all $k\le N$),
\begin{equation}
\label{recurAn}
A_{n+1}=\frac{\sqrt{T_{n}}}{\sqrt{T_{n+1}}}C_nA_n +\epsilon^3\frac{-R_n+iS_n}{\sqrt{T_{n+1}}}
\end{equation}
with $C_n= 1+i\epsilon^2 \sqrt{h^2-T_n}$.

Denote, for $n\ge 0$, $\tilde{C}_n=\frac{\sqrt{T_{n}}}{\sqrt{T_{n+1}}}C_n$ and let $B_0 := A_0$ and $A_n=B_n\prod_{\ell=0}^{n-1}\tilde{C}_\ell$ for $n \ge 1$.

The recursion \eqref{recurAn} becomes, upon changing $n$ to $n-1$,
\begin{equation}
\label{RecBn}
B_{n}=B_{n-1}+\epsilon^3 \tilde{S}_{n-1},\ \ \ \text{where } \tilde{S}_{n-1}=\frac1{\prod_{\ell=0}^{n-1}C_\ell}   \frac{-R_{n-1}+iS_{n-1}}{\sqrt{T_{0}}}. 
\end{equation}
Summing \eqref{RecBn} from $1$ to $n$ it follows that
\begin{equation}
\label{formBn}
B_n=A_0+\epsilon^3 \sum_{k=0}^{n-1} \tilde{S}_k
\end{equation}
and therefore, using the fact that $A_0=\dfrac{u_0+iw_0}{\sqrt{T_0}}$,
\begin{equation}
\label{interAn}
A_n=\frac{1}{\sqrt{T_n}}\, \prod_{\ell=0}^{n-1}C_\ell\, \left(u_0+iw_0+\epsilon^3 \sum_{k=0}^{n-1}\frac{-R_k+iS_k}{\prod_{\ell=0}^{k}C_\ell}\right).
\end{equation}

Noting that $|C_\ell|\ge 1$  the last sum is less, in absolute value, than $n\epsilon^3M\sqrt{2}$. 

For an upper estimate of $|\ln C_\ell |$ note that we have $\Re \ln(1+i\epsilon^2x)=\ln\sqrt{1+\epsilon^4x^2}=O(\epsilon^4)$ and $\Im \ln(1+i\epsilon^2x)=i\epsilon^2x+O(\epsilon^6)$ where for $x\in(0,h)$ the terms $O(\epsilon^4)$ have an absolute bound. Therefore
\begin{equation}
 \label{estimPCj}
\prod_{k=0}^{n-1}C_k=\exp\left(\sum_{k=0}^{n-1}\ln C_k\right) \\
= \exp\left( i\epsilon^2\sum_{k=0}^{n-1} \sqrt{h^2-T_k} +n\epsilon^4\eta'_n\right)
 \end{equation}
 where $\eta'_n$ has a uniform bound. 
 
Further noting that by \eqref{solrecTp} we have
\begin{equation}
 \label{estimSTn}
 \sqrt{T_n}=\sqrt{T_0+n\epsilon^3\delta_n}=\sqrt{T_0}+O\left(n\epsilon^3\right)
 \end{equation}
 Using \eqref{estimPCj} and \eqref{defphi0} in \eqref{interAn} we obtain \eqref{unwn}.
  
 To keep track of $\eta_n$, from \eqref{interAn}, \eqref{estimPCj}, \eqref{estimSTn} we have
 $$1+n\epsilon^3\eta_n:= \left(1 + \epsilon^3 \dfrac{e^{-i\phi_0}}{\sqrt{T_0}}\sum_{k=0}^{n-1}\frac{-R_k+iS_k}{\prod_{\ell=0}^{k}C_\ell} \right)e^{n\epsilon^4\eta'_n} \le \left( 1+e^{-i\phi_0}n\epsilon^3M\sqrt{2}\eta''_n\right)e^{n\epsilon^4\eta'_n} $$
 where $|\eta''_n|\le 1$ and $\eta'_n$ has a bound in terms of $T_0$ only.

 \ 

\noindent{\em 3. The case $n\epsilon^{5/2}\ll1$.} We show that $\sqrt{h^2-T_k}$ is close to $\sqrt{h^2-T_0}$: using \eqref{boundsTk} we have
\begin{equation}
 \label{estimsq}
 \left|\sqrt{h^2-T_0- k\epsilon^3\delta_k}-\sqrt{h^2-T_0}\right|\le k\epsilon^3M_\delta \,\frac{1}{2\sqrt{c_2} } 
\end{equation}
therefore
$$\epsilon^2\left| \sum_{k=0}^{n-1} \sqrt{h^2-T_k}-\sqrt{h^2-T_0}\right|\le n^2\epsilon^5M_\delta \,\frac{1}{2\sqrt{c_2} }$$
and  we obtain \eqref{unwnsmaller}. 

(iii)  If $u_0=w_0=0$ then $T_0=0$ and the arguments above in (i) for estimating $T_n$ are still valid and it follows that $T_n=O(n\epsilon^3)$ for $n = 0,1,...,N$ where $N$ satisfies $N\epsilon_0^3 \le K$. Moreover, a direct iteration of \eqref{recurrenceuw} gives that $|u_n|,|w_n|\le Cn\epsilon^3$ for a suitable $C>0$ (which is easily proven by induction).
\end{proof}

\begin{lemma}
\label{boundingremainders} Fix $h>0$. Consider the recurrence \eqref{recurrenceuw} with initial conditions $u_0,w_0$ satisfying 
\begin{equation}\label{assumpincond}
  0<u_0^2 + w_0^2 <h^2.
  \end{equation}
Let $c_0,c_1$ be positive constants so that
$$0<c_1<u_0^2 + w_0^2 <c_0^2<h^2.$$
Then there exist positive constants $\epsilon_0$, $K_0, M$ which depend only on $c_0,c_1$ so that for any $N$ satisfying $N \epsilon_0^3 \le K_0$, the remainders $R_n$ and $S_n$ in \eqref{recurrenceuw} satisfy $|R_n| \le M$, $|S_n| \le M$ for all $n = 0,1,...,N-1$ and for all $\epsilon \in \left[0,\epsilon_0 \right]$.     
\end{lemma}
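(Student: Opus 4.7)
The statement is proved by a bootstrap induction on $n$ that interlocks Lemma \ref{L2} and Lemma \ref{Tn}. Lemma \ref{L2} yields a per-loop bound $|R_n|, |S_n| \le M$ under the hypothesis that the values $(u_n, w_n)$ at the start of the $(n+1)$-th loop satisfy the condition \eqref{initialconditionsassumptions} for some admissible threshold. Conversely, Lemma \ref{Tn} shows that if such bounds already hold for all earlier indices $k < n$, then $T_n := u_n^2 + w_n^2$ drifts from $T_0$ by at most $n\epsilon^3 M_\delta$, so the admissible region is preserved throughout. Closing this mutual dependence is the crux of the argument.

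To execute this, I would first introduce an auxiliary upper threshold $c_0'$ with $c_0 < c_0' < h$ (e.g.\ $c_0' = (c_0+h)/2$) and a lower threshold $c_1'$ with $0 < c_1' < c_1$. Applying Lemma \ref{L2} with threshold $c_0'$ produces constants $\epsilon_1 > 0$ and $M > 0$ depending only on $c_0'$, hence only on $c_0$ and $h$. Applying Lemma \ref{Tn}(i) with this $M$, the original upper bound $c_0$, and the weakened lower bound $c_1'$ yields constants $M_\delta$ and $K > 0$ depending only on $c_0, c_1, h$. Finally, I choose $K_0 \le K$ small enough that
\[
K_0 M_\delta < \min\bigl\{(c_0')^2 - c_0^2,\ c_1 - c_1'\bigr\},
\]
and set $\epsilon_0 \le \epsilon_1$; all quantities in sight depend only on $c_0, c_1, h$.

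The induction itself is then short. The base case $n = 0$ follows from Lemma \ref{L2} since $T_0 < c_0^2 < (c_0')^2$ by hypothesis. For the inductive step, assume $|R_k|, |S_k| \le M$ for $k = 0, \ldots, n-1$. Invoke Lemma \ref{Tn}(i) on the index range $\{0, 1, \ldots, n\}$ (which is permissible precisely because its constants $M_\delta, K$ do not depend on the horizon) to obtain $T_n = T_0 + n\epsilon^3\delta_n$ with $|\delta_n| \le M_\delta$. The choice of $K_0$ forces $c_1' < T_n < (c_0')^2$, so the hypothesis of Lemma \ref{L2} with threshold $c_0'$ holds at the start of the $(n+1)$-th loop; Lemma \ref{L2} then delivers $|R_n|, |S_n| \le M$, completing the step.

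The main delicate point — and essentially the only obstacle — is verifying that the constants $M, M_\delta, K$ extracted from Lemmas \ref{L2} and \ref{Tn} genuinely depend only on $c_0, c_1, h$, with no hidden dependence on the horizon $N$ or on the precise value of $T_0$ within the admissible interval. Both earlier lemmas are phrased with exactly this uniformity, which is what makes the bootstrap close cleanly; without it the induction could degrade as $n$ grows and the trajectory might eventually escape the region where Lemma \ref{L2} is available.
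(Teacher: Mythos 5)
Your proof is correct and follows essentially the same bootstrap induction as the paper: interlocking Lemma~\ref{L2} (to get a per-loop bound on $R_n, S_n$) with Lemma~\ref{Tn}(i) (to control the drift of $T_n = u_n^2 + w_n^2$), and choosing $K_0$ small enough that $(u_n,w_n)$ remains in a region where Lemma~\ref{L2} stays applicable. The only difference is your placement of the auxiliary thresholds outside $(c_1, c_0^2)$ (taking $c_1' < c_1$ and $c_0 < c_0' < h$) rather than the paper's interior thresholds $c_1 < a_1 < T_0 < a_2 < c_0^2$; your variant has the minor advantage of making $K_0$ depend transparently only on $c_0, c_1, h$ rather than implicitly on $T_0$ through the gaps $T_0 - a_1$ and $a_2 - T_0$.
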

\begin{proof}
Denote 
$$T_n = u_n^2 + w_n^2.$$
We first fix the constants $\epsilon_0,K_0, M$.

Let $a_1,a_2$ be such that
\begin{equation}\label{c1a2a2c0}
0<c_1< a_1<u_0^2 + w_0^2 <a_2< c_0^2<h^2.
\end{equation}
Applying Lemma\,\ref{L2} for the initial conditions $v_0=h-u_0$ and $w_0$ there exist $\epsilon_0'$ and $M$, dependent only on $c_0$, so that $|R_0|$ and $|S_0|$ are bounded by $M$, for all $\epsilon \in [0,\epsilon_0']$.

Applying Lemma \ref{Tn}(i) for $N=1$, there exist positive constants $M_{\delta}, K$ depending on $M, c_0,c_1$, namely satisfying \eqref{estimM}, \eqref{estimK}, \eqref{condM}, such that if $\epsilon_0^3 \le K$ (that is, we choose $\epsilon_0$ such that it satisfies $\epsilon_0 \le \epsilon_0'$ and $\epsilon_0^3 \le K$) then
$ T_1 = T_0 + \epsilon^3 \delta_1$
with $|\delta_1| \le M_{\delta}$ for all $\epsilon \in \left[0,\epsilon_0 \right]$.

Let 
\begin{equation}\label{defK0}
    K_0 := \min \left\{K, \dfrac{T_0 - a_1}{M_{\delta}}, \dfrac{a_2 - T_0}{M_{\delta}} \right\}.
\end{equation}

We now prove by induction that if $T_{n}$ satisfies $0<c_1< a_1\le T_{n} \le a_2< c_0^2<h^2$ and $|R_n|,|S_n|$, are bounded by $M$ for all positive $\epsilon\le\epsilon_0$ and for all $n\le N-1$ then the inequalities hold for $n=N$ (with the same $M$ and $\epsilon_0$) as long as $N\epsilon_0^3\le K_0$.

(i) The step $N = 1$ is valid by the  choice of $M, K_0,\epsilon_0$ above.

(ii) Now assume the inequalities hold for $n\le N-1$ and prove them for $n=N$, under the assumption  $N\epsilon_0^3\le K_0$. By the induction hypothesis we have  $|R_n|,|S_n|\le M$ for $n\le N-1$. By  lemma \ref{Tn},  (i)  with the  same $K, M_\delta$ as at (i) (since the quantities depend only on $c_0$), we have $T_N=T_0+N\epsilon^3\delta_N$  with $|\delta_N|\le M_\delta$ if $N\epsilon_0^3<K$  (which is clearly satisfied  since $K_0\le K$). Then
$T_N\le T_0+K_0M_\delta\le  T_0+(a_2-T_0)=a_2$
and similarly,
$T_N\ge T_0-K_0M_\delta\ge  a_1$. Then $c_1<T_N<c_0^2$ and Lemma\,\ref{L2} can be applied, yielding then $|R_N|,\,|S_N|$ are bounded by the same $M$ for $\epsilon\le\epsilon_0$.

\end{proof}

\subsection{End of the proof of Theorem\,\ref{mainth}}\label{endofproof}

\subsubsection{Proof of (i) }\label{Pfofi} This follows by a straightforward calculation, see the Appendix \ref{v0is0}. 

\subsubsection{Proof of (ii) }\label{Pfofii} This was proved in Lemma\,\ref{Tn} (iii).

\subsubsection{Proof of (iii) }\label{Pfofiii}
By Lemma\,\ref{boundingremainders},  Lemma\,\ref{Tn},  and the rescaling \eqref{rescaling} the theorem follows.

\subsubsection{Proof of (iii)}\label{Pfofiii2}  When $n\epsilon^{5/2}\ll1$, we use the rescaling \eqref{rescaling} and proof of Lemma\,\ref{Tn} (ii).

\section{Comparison between the approximate solutions \eqref{sincos} and numerical solutions}\label{numerical}
The agreement between our perturbative formulas for $v$ and $w$ and their numerical simulations is shown in figure\,\ref{finalcomp}.
We found empirically that the numerical solver is accurate with at least ten digits. The numerical values were calculated using Wolfram Mathematica with WorkingPrecision $=32$, AccuracyGoal $=30$ and MaxSteps $=10^8$. 

Figure\,\ref{error_analysis} shows that the maximal error stays small  for hundreds of thousands of oscillations.
\begin{figure}[hbt!]
    \centering
    \includegraphics[width=0.5\textwidth]{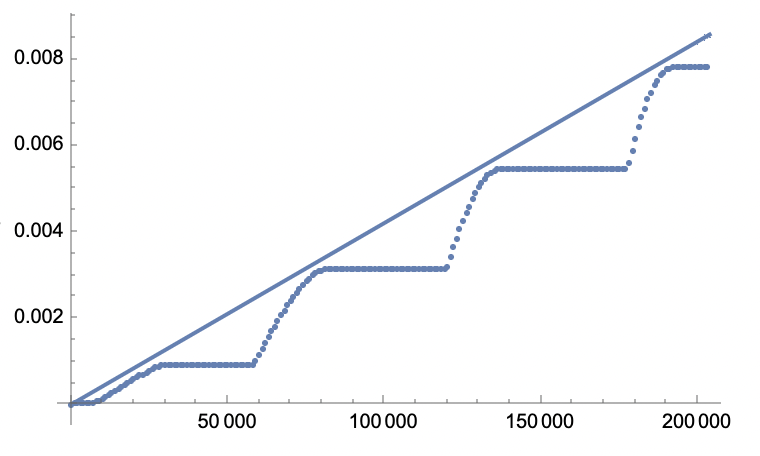}
    \caption{Error Analysis: maximum value of the difference between the numerical and theoretical value of  $v_n$ on the interval $[0,n]$ as a function of $n$.}
    \label{error_analysis}
\end{figure}

\subsection{Open questions}
The first question is whether better approximations, for longer times, can be obtained by using a higher order truncation, or perhaps by using other slow variables. The question of determining, rigorously, the region that the trajectory fills densely, as seen in figure \ref{fig:evolution} (C) remains open, as well as the question of existence of regions of order in the phase space.


\appendix
\section{Solution through perturbation expansion}
\label{epsilon square term}
\noindent The $\epsilon$ term in the perturbation expansion for $x(t)$ and $y(t)$ are 
\begin{align*}
    x_{\epsilon}(t) &=   -\dfrac{4}{3} \sin ^2\left(\frac{t}{2}\right) \Big((x_0 \dot{y}_0+y_0 \dot{x}_0) \sin{t} +(x_0 y_0-\dot{x}_0 \dot{y}_0)\cos t  +2 x_0 y_0+\dot{x}_0 \dot{y}_0\Big),  \\
    y_{\epsilon}(t) &=  -\frac{2}{3} \sin ^2\left(\frac{t}{2}\right) \Big( \left(x_0^2-y_0^2-\dot{x}_0^2+\dot{y}_0^2\right)\cos{t} + 2  (x_0 \dot{x}_0-y_0 \dot{y}_0)\sin{t} + \dot{x}_0^2-\dot{y}_0^2 +2 x_0^2-2 y_0^2 \Big). 
\end{align*}
The $\epsilon^2$ term in the perturbation expansion for $x(t)$ and $y(t)$ are $x_{\epsilon^2}(t) = f_1(t) + t f_2(t)$, $y_{\epsilon^2}(t) = g_1(t) + t g_2(t)$, where
\begin{align*}
f_1(t) &= \frac{1}{144} \big(16 \cos (2 t) \left(x_0^3+x_0 y_0^2+4 x_0 \dot{x}_0^2+4 y_0 \dot{x}_0 \dot{y}_0\right)+29 x_0^3 \cos (t)+3 x_0^3 \cos (3 t)-48 x_0^3\\
&+65 x_0^2 \dot{x}_0 \sin (t)-16 x_0^2 \dot{x}_0 \sin (2 t)+9 x_0^2 \dot{x}_0 \sin (3 t)+29 x_0 y_0^2 \cos (t)+3 x_0 y_0^2 \cos (3 t)\\
&-48 x_0 y_0^2 +86 x_0 y_0 \dot{y}_0 \sin (t) +32 x_0 y_0 \dot{y}_0 \sin (2 t)+6 x_0 y_0 \dot{y}_0 \sin (3 t)-55 x_0 \dot{x}_0^2 \cos (t)\\
&-9 x_0 \dot{x}_0^2 \cos (3 t) -189 x_0 \dot{y}_0^2 \cos (t)-3 x_0 \dot{y}_0^2 \cos (3 t)+192 x_0 \dot{y}_0^2 -21 y_0^2 \dot{x}_0 \sin (t)\\
&-48 y_0^2 \dot{x}_0 \sin (2 t) +3 y_0^2 \dot{x}_0 \sin (3 t)+134 y_0 \dot{x}_0 \dot{y}_0 \cos (t)-6 y_0 \dot{x}_0 \dot{y}_0 \cos (3 t)+32 \dot{x}_0 \dot{y}_0^2 \sin (2 t)\\
&-192 y_0 \dot{x}_0 \dot{y}_0+5 \dot{x}_0^3 \sin (t) +32 \dot{x}_0^3 \sin (2 t)-3 \dot{x}_0^3 \sin (3 t)+5 \dot{x}_0 \dot{y}_0^2 \sin (t)-3 \dot{x}_0 \dot{y}_0^2 \sin (3 t)\big), \\
f_2(t) &= \frac{1}{144} \big(60 x_0^3 \sin (t)-60 x_0^2 \dot{x}_0 \cos (t)+60 x_0 y_0^2 \sin (t)-168 x_0 y_0 \dot{y}_0 \cos (t)+168 y_0 \dot{x}_0 \dot{y}_0 \sin (t)\\
&+60 x_0 \dot{x}_0^2 \sin (t) -108 x_0 \dot{y}_0^2 \sin (t) +108 y_0^2 \dot{x}_0 \cos (t)-60 \dot{x}_0 \left(\dot{x}_0^2+\dot{y}_0^2\right) \cos (t)\big), \\
g_1(t) &= \frac{1}{144} \big(16 \cos (2 t) \left(x_0^2 y_0+4 x_0 \dot{x}_0 \dot{y}_0+y_0^3+4 y_0 \dot{y}_0^2\right)+29 x_0^2 y_0 \cos (t)+3 x_0^2 y_0 \cos (3 t)-48 x_0^2 y_0 \\
&-21 x_0^2 \dot{y}_0 \sin (t)-48 x_0^2 \dot{y}_0 \sin (2 t)+3 x_0^2 \dot{y}_0 \sin (3 t)+86 x_0 y_0 \dot{x}_0 \sin (t)+32 \dot{y}_0^3 \sin (2 t)\\
&+32 x_0 y_0 \dot{x}_0 \sin (2 t) +6 x_0 y_0 \dot{x}_0 \sin (3 t) +134 x_0 \dot{x}_0 \dot{y}_0 \cos (t)-6 x_0 \dot{x}_0 \dot{y}_0 \cos (3 t)\\
&-192 x_0 \dot{x}_0 \dot{y}_0+29 y_0^3 \cos (t) +3 y_0^3 \cos (3 t)-48 y_0^3+65 y_0^2 \dot{y}_0 \sin (t)-16 y_0^2 \dot{y}_0 \sin (2 t)\\
&+9 y_0^2 \dot{y}_0 \sin (3 t)-y_0 \left(189 \dot{x}_0^2+55 \dot{y}_0^2\right) \cos (t)-3 y_0 \dot{x}_0^2 \cos (3 t)+192 y_0 \dot{x}_0^2-3 \dot{y}_0^3 \sin (3 t)\\
&-9 y_0 \dot{y}_0^2 \cos (3 t)+5 \dot{x}_0^2 \dot{y}_0 \sin (t) +32 \dot{x}_0^2 \dot{y}_0 \sin (2 t)-3 \dot{x}_0^2 \dot{y}_0 \sin (3 t)+5 \dot{y}_0^3 \sin (t)\big), \text{ and }\\
g_2(t) &= \frac{1}{144} \big(60 x_0^2 y_0 \sin (t)+108 \dot{x}_0^2 \dot{y}_0 \cos (t)-168 x_0 y_0 \dot{x}_0 \cos (t)+168 x_0 \dot{x}_0 \dot{y}_0 \sin (t) \\
&+60 y_0^3 \sin (t) -60 y_0^2 \dot{y}_0 \cos (t)-108 y_0 \dot{x}_0^2 \sin (t)+60 y_0 \dot{y}_0^2 \sin (t)-60 \dot{y}_0 \left(\dot{x}_0^2+\dot{y}_0^2\right) \cos (t)\big).
\end{align*}

\section{Higher order expansions for the slow variables}
\label{vwExpansions}

\noindent Let $v(y) = v^{[0]}(y)+\epsilon v^{[1]}(y) + \epsilon^2 v^{[2]}(y) + \epsilon^3 R(y)$ and $w(y) = w^{[0]}(y)+\epsilon w^{[1]}(y) + \epsilon^2 w^{[2]}(y) + \epsilon^3 S(y)$  (where the remainders $R,S$ also depend on $\epsilon$). Clearly, $v^{[0]}(y) = v_0, w^{[0]}(y) = w_0$. To calculate the next two terms, we expand $2\epsilon (-x^2+y^2)$:
\begin{equation*}
    2\epsilon (-x^2+y^2) = \epsilon T_1(y)+\epsilon^2 T_2(y)+O(\epsilon^3)
\end{equation*}
where 
\begin{equation}
\label{T1}
    T_1(y) = -\frac{2 r_0}{v_0}+\frac{4 y^2 \left(h v_0-w_0^2\right)}{v_0^2}-\frac{4 r_0 w_0 y \sqrt{v_0-y^2}}{v_0^2}
\end{equation}
where we used the notation
$$r_0 := \sqrt{2 h v_0-v_0^2-w_0^2}$$ and
\begin{equation}
    \label{T2}
    T_2(y)=v^{[1]}(y)A(y)+w^{[1]}(y)B(y)+C(y)
\end{equation}
where the exact expressions of $A(y),B(y), C(y)$ are found in the Appendix \ref{ABC}.

Similarly,
\begin{equation*}
    -\epsilon\left[ \frac{\left(x^2-y^2\right)\dot{x}}{\sqrt{v-y^2}}+2xy \right] \equiv \epsilon \tilde{T}_1(y)+\epsilon^2 \tilde{T}_2(y)+O(\epsilon^3)
\end{equation*}
where 
\begin{equation}\label{T1Tilde}
    \tilde{T}_1(y) = -\frac{2 w_0 y^2 \left(-3 h v_0+2 v_0^2+2 w_0^2\right)}{v_0^3}-\frac{r_0 y \left(y^2 \left(r_0^2-v_0^2-3 w_0^2\right)+v_0 \left(-r_0^2+2 v_0^2+2
   w_0^2\right)\right)}{v_0^3 \sqrt{v_0-y^2}}-\frac{r_0^2 w_0}{v_0^2} 
\end{equation}
and 
\begin{equation}
\label{eqtT2}
 \tilde{T}_2(y)=v^{[1]}(y) \tilde{A}(y)+w^{[1]}(y) \tilde{B}(y)+ \tilde{C}(y)
  \end{equation}
where the exact expressions of $\tilde{A}(y),\tilde{B}(y), \tilde{C}(y)$ are found in the Appendix \ref{ABC}.

Using \eqref{T1} in $v^{[1]}(y)=\int_0^y T_1(\xi)d\xi$ we obtain
\begin{equation}\label{v1}
         v^{[1]}(y) = \frac{-2}{3 v_0^2} \Big(-2 w_0 \left(v_0-y^2\right){}^{3/2} r_0+2 v_0^{3/2} w_0 r_0-2 h v_0 y^3+ 6 h v_0^2 y -3 v_0 w_0^2 y-3 v_0^3 y+2 w_0^2 y^3\Big). 
\end{equation}
Using \eqref{T1Tilde} in $w^{[1]}(y)=\int_0^y \tilde{T}_1(\xi) d\xi$, we get
\begin{equation}\label{w1}
    \begin{split}
       w^{[1]}(y) =& \frac{1}{3v_0^3} \Big(6 h v_0 w_0 y^3-6 h v_0^2 w_0 y-4 v_0^2 w_0 y^3-5 v_0^{7/2} r_0 +3 v_0^3 w_0 y -v_0^{3/2} w_0^2r_0+2 h v_0^{5/2} r_0+3 v_0 w_0^3 y \\
       &-4 w_0^3 y^3 + \sqrt{v_0-y^2}(-2 v_0^2 y^2 r_0 -2 h v_0^2  r_0+5 v_0^3  r_0+v_0 w_0^2 r_0 +2 h v_0 y^2  r_0-4 w_0^2 y^2 r_0 )\Big). 
    \end{split}
\end{equation}
In the above, we find the integrals using the fact that each term of $T_1(\xi)$ and $\tilde{T}_1(\xi)$ has an analytic antiderivative and therefore we substitute the integration limits $y$ and $0$ in the antiderivative function. 

Substituting \eqref{v1}, \eqref{w1} in \eqref{T1} and integrating from $0$ to $y$ along $\mathcal{C}_{v_0}$, we obtain:
\begin{equation}
\label{valv2}
v^{[2]}(y)= \dfrac{7 w_0 r_0}{3}\int_0^y\frac{ds}{\sqrt{v_0-s^2}}
 +\text{Polynomial}(y, \sqrt {v_0-{y}^2}).
 \end{equation}
 
Substituting \eqref{v1}, \eqref{w1} in \eqref{T1Tilde} and integrating from $0$ to $y$ along $\mathcal{C}_{v_0}$, we obtain:
\begin{equation}
\label{valw2}
w^{[2]}(y)=\dfrac{7 (h-v_0) r_0}{3}\int_0^y\frac{ds}{\sqrt{v_0-s^2}}
 + \dfrac{\text{Polynomial}(y, \sqrt {v_0-{y}^2})}{\sqrt {v_0-{y}^2}}. 
 \end{equation}
After one integration along $\mathcal{C}_{v_0}$, the value of $v(y)$ and $w(y)$ at $y=0$, respectively, become
\begin{equation*}
    v_1:=\lim_{y\to 0, y\in \mathcal{C}_{v_0}}v(y), \  w_1:=\lim_{y\to 0, y\in \mathcal{C}_{v_0}}w(y).
\end{equation*}
Using \eqref{vwhigherExpansion}, \eqref{v1}, \eqref{w1}, \eqref{valv2}, \eqref{valw2}, we obtain
\begin{equation*}
    v_1 = v_0 + \frac{14 \pi}{3} \epsilon^2 w_0 r_0 + \epsilon^3 R_0, \  w_1 = w_0 + \frac{14 \pi}{3} \epsilon^2 (h-v_0)  + \epsilon^3 S_0.
\end{equation*}
We used that after analytic continuation along one loop on $\mathcal{C}_{v_0}$, $\arcsin{\left( \frac{y_0}{\sqrt{v_0}} \right)}$ becomes $\arcsin{\left( \frac{y_0}{\sqrt{v_0}} \right)} + 2\pi$, while the other terms are not ramified and return to their zero value. 

\section{Exact formulas for series coefficients}\label{ABC}
Denoting $r_0 = \sqrt{2 h v_0-v_0^2-w_0^2}$, the exact formulas for $A(y),B(y),C(y)$ are given by:
\begin{align*}
     A(y) &= 2+\frac{8 w_0^2 y^2}{v_0^3}-\frac{2 \left(2 h
   y^2+w_0^2\right)}{v_0^2}-\frac{2 w_0 y^3 \left(6 h v_0-2 v_0^2-4 w_0^2\right)+2 w_0 y \left(-4 h v_0^2+3 v_0 w_0^2+v_0^3\right)}{r_0 v_0^3 \sqrt{v_0-y^2}}\\
   B(y) &= \frac{4 w_0}{v_0} -\frac{8 w_0 y^2}{v_0^2} +\frac{4 y \sqrt{v_0-y^2} \left(-2 h v_0+v_0^2+2 w_0^2\right)}{r_0 v_0^2} \\
    C(y) &= \frac{4 r_0^2 y}{v_0}-\frac{4 y^3 \left(12 h v_0-5 v_0^2-15 w_0^2\right)}{3 v_0^2} +\frac{8 y^5 \left(3 h v_0-v_0^2-6
   w_0^2\right)}{3 v_0^3} \\
   &+\sqrt{v_0-y^2} \left(\frac{12 r_0 w_0 y^2}{v_0^2}-\frac{8 y^4 \left(9 h v_0 w_0-4 v_0^2 w_0-6 w_0^3\right)}{3 r_0 v_0^3}\right)
\end{align*}
The exact expressions of $\tilde{A}(y),\tilde{B}(y), \tilde{C}(y)$ are:
\begin{align*}
    \tilde{A}(y) &= -\frac{2
   w_0^3}{v_0^3}+\frac{2 h w_0}{v_0^2}-\frac{4 w_0 y^2 \left(3 h v_0-v_0^2-3 w_0^2\right)}{v_0^4} \\
   &- \frac{1}{2 r_0 v_0^4
   \left(v_0-y^2\right)^{3/2}} \Big(y (r_0^2 (v_0^2 \left(-10 h y^2-15 w_0^2+y^4\right)+2 v_0^3 \left(2 h+y^2\right)+40 v_0 w_0^2 y^2-v_0^4\\
   &-18 w_0^2 y^4) -v_0^2 w_0^2 y^2 \left(10 h+3 y^2\right)-2 v_0^4 \left(3 h y^2+2 w_0^2+y^4\right)+2 v_0^3 \left(w_0^2 \left(2 h+5
   y^2\right)+h y^4\right)\\
   &+v_0^5 \left(4 h+6 y^2\right) +3 r_0^4 y^4-4 v_0^6 +3 w_0^4 y^4) \Big)\\
    \tilde{B}(y) &= \frac{6 h v_0 y^2-2 h v_0^2+3 v_0 w_0^2-4 v_0^2 y^2+v_0^3-12 w_0^2 y^2}{v_0^3} \\
    &-\frac{w_0 y \left(r_0^2 \left(7 v_0-9 y^2\right)-2
   v_0 w_0^2+v_0^2 y^2-2 v_0^3+3 w_0^2 y^2\right)}{r_0 v_0^3 \sqrt{v_0-y^2}} \\
   \tilde{C}(y)&= \frac{1}{3 v_0^4}\Big(2 w_0 (-18 v_0^2 y^4-36
   w_0^2 y^4+18 (r_0^2+v_0^2+w_0^2) y^4+26 v_0^3 y^2-42 h v_0^2 y^2+30 v_0 w_0^2 y^2 \\
   & -3 v_0^4+6 h v_0^3-3 v_0^2
   w_0^2) y\Big) +\frac{1}{3 r_0 v_0^6(v_0-y^2)^{3/2}}\Big((6 v_0^9-12 h v_0^8+21 r_0^2 v_0^7+12 w_0^2 v_0^7  -18 h r_0^2 v_0^6 \\
   &-12 h w_0^2 v_0^6+6 w_0^4 v_0^5+33 r_0^2 w_0^2
   v_0^5+y^2 (-17 v_0^8+30 h v_0^7-54 r_0^2 v_0^6-32 w_0^2 v_0^6 +54 h r_0^2 v_0^5 \\
   &+42 h w_0^2 v_0^5 -15 w_0^4 v_0^4-138 r_0^2
   w_0^2 v_0^4)+y^6 (3 r_0^6+3 v_0^2 r_0^4+9 w_0^2 r_0^4-12 v_0^4 r_0^2+9 w_0^4 r_0^2 +24 h v_0^3 r_0^2 \\
   &-12 h^2 v_0^2r_0^2 -66 v_0^2 w_0^2 r_0^2 -4 v_0^6+3 w_0^6+6 h v_0^5+3 v_0^2 w_0^4-12 v_0^4 w_0^2+24 h v_0^3 w_0^2-12 h^2 v_0^2 w_0^2) \\
   &+y^4(15 v_0^7 -24 h v_0^6 +54 r_0^2 v_0^5+41 w_0^2 v_0^5-90 h r_0^2 v_0^4-84 h w_0^2 v_0^4+9 r_0^4 v_0^3+27 w_0^4 v_0^3+36 h^2
   r_0^2 v_0^3 \\
   &+36 h^2 w_0^2 v_0^3 +204 r_0^2 w_0^2 v_0^3 -18 h r_0^4 v_0^2-36 h w_0^4 v_0^2-54 h r_0^2 w_0^2 v_0^2+9 w_0^6 v_0+18
   r_0^2 w_0^4 v_0 \\
   &+9 r_0^4 w_0^2 v_0) +y^3 (-6 v_0^{3/2} w_0 r_0^5+6 v_0^{3/2} w_0^3 r_0^3-6 v_0^{3/2} w_0
   \left(w_0^2-r_0^2\right) r_0^3)) y^2\Big)
\end{align*}

\section{The case $y_0=\dot{y}_0=0$}\label{v0is0}
In this case $x(t)=\dot{x}_0\sin t+x_0\cos t+O(\epsilon)$ and $y(t)=O(\epsilon)$ (for not large $t$), so after a translation in $t$ (of order $1$)  we can assume $\dot{x}_0=0$ (and therefore $h=x_0^2/2$).
 A direct calculation yields 
 $$x(t)=\sqrt{2}\, \sqrt{h}\, \cos \! \left(t \right)+\frac{\epsilon^{2} \left(3 \cos^{3}\left(t \right)+8 \cos^{2}\left(t \right)+15 \sin \! \left(t \right) t +5 \cos \! \left(t \right)-16\right) \sqrt{2}\, h^{\frac{3}{2}}}{18}+O(\epsilon^{4} )$$
 and
\begin{multline*}
y(t)=\frac{2 \epsilon  h \left(-2+\cos^{2}\left(t \right)+\cos \! \left(t \right)\right)}{3}\\
+\frac{\epsilon^{3} \left(\left(150 \cos \! \left(t \right)+75\right) \sin \! \left(t \right) t +2 \cos^{4}\left(t \right)+15 \cos^{3}\left(t \right)+318 \cos^{2}\left(t \right)-239 \cos \! \left(t \right)-96\right) h^{2}}{135}
+
O(\epsilon^{4} )
\end{multline*}
 where we see that $x(t)$ and $y(t)$ have the secular term $t\epsilon^2$ and $t \epsilon^3$, respectively. However, the secular terms in $v$, respectively $w$, appear as $t\epsilon^4$, respectively $t\epsilon^3$:
$$v(t)=-4\,\frac{\left(3 \cos^{4}\left(t \right)+2 \cos^{3}\left(t \right)-5\right) h^{2}}{9} \epsilon^{2}\\
+O \left(\epsilon^{4}\right) $$
and
\begin{multline*}
w(t)=-\frac{2 \left(\cos^{3}\left(t \right)-1\right) \sqrt{2}\, h^{\frac{3}{2}}}{3} \epsilon +\frac{1}{45} (-68-27 \cos^{5}\left(t \right)-60 \cos^{4}\left(t \right)-5 \cos^{3}\left(t \right)
\\+\left(-75 \sin \! \left(t \right) t +80\right) \cos^{2}\left(t \right) +80 \cos \! \left(t \right)) \sqrt{2}\, h^{\frac{5}{2}} \epsilon^{3}+O \left(\epsilon^{5}\right)
\end{multline*}

\end{document}